\documentclass[letterpaper,dvipsnames,12pt]{article}

\usepackage[margin=1in]{geometry}
\usepackage[utf8]{inputenc}
\usepackage[T1]{fontenc}
\usepackage{amsmath,amsthm,amssymb}
\usepackage[
    colorlinks=true,
    linkcolor=Maroon,
    citecolor=MidnightBlue,
    urlcolor=MidnightBlue
]{hyperref}
\usepackage{xcolor}
\usepackage{enumerate}
\usepackage{amsrefs}
\usepackage{colonequals}
\usepackage{microtype}

\let\Re\relax
\DeclareMathOperator{\Re}{Re}
\let\Im\relax
\DeclareMathOperator{\Im}{Im}
\DeclareMathOperator{\Law}{Law}
\DeclareMathOperator{\diag}{diag}
\DeclareMathOperator{\Tr}{Tr}
\DeclareMathOperator{\Cov}{Cov}
\DeclareMathOperator{\Var}{Var}
\DeclareMathOperator{\Pois}{Pois}
\DeclareMathOperator{\supp}{supp}

\let\P\relax
\newcommand{\P}{\mathbb{P}}
\newcommand{\C}{\mathbb{C}}
\newcommand{\R}{\mathbb{R}}
\newcommand{\N}{\mathbb{N}}
\newcommand{\Z}{\mathbb{Z}}
\newcommand{\E}{\mathbb{E}}

\newcommand{\bX}{\mathbf{X}}
\newcommand{\bY}{\mathbf{Y}}
\let\d\relax
\newcommand{\d}{\mathrm{d}}
\newcommand{\dis}{\rho}
\newcommand{\eps}{\varepsilon}
\let\complement\relax
\newcommand{\complement}{\mathrm{c}}

\newcommand{\cC}{\mathcal{C}}
\newcommand{\cH}{\mathcal{H}}
\newcommand{\cJ}{\mathcal{J}}
\newcommand{\cN}{\mathcal{N}}
\newcommand{\cR}{\mathcal{R}}
\newcommand{\cW}{\mathcal{W}}
\newcommand{\cT}{\mathcal{T}}

\newcommand{\hH}{\widehat{H}}
\newcommand{\hX}{\widehat{X}}
\newcommand{\hq}{\widehat{q}}

\renewcommand{\le}{\leqslant}
\renewcommand{\ge}{\geqslant}

\newcommand{\tv}{\mathrm{d}_\mathrm{TV}}
\newcommand{\ind}{\mathbf{1}}
\newcommand{\LS}{\Lambda_{\mathrm{slow}}}
\newcommand{\tmix}{t_{\mathrm{mix}}^{(n)}}
\newcommand{\abs}[1]{\left|#1\right|}

\newcommand{\fnorm}[1]{\left\lVert#1\right\rVert_{\mathrm{F}}}
\newcommand{\snorm}[1]{\left\lVert#1\right\rVert_{\star}}
\newcommand{\rnorm}[1]{\left\lVert#1\right\rVert_{\mathrm{row}}}
\newcommand{\onenorm}[1]{\left\lVert#1\right\rVert_1}
\newcommand{\twonorm}[1]{\left\lVert#1\right\rVert_2}
\newcommand{\opnorm}[1]{\left\lVert#1\right\rVert_{\mathrm{op}}}
\newcommand{\inner}[2]{\left\langle#1, #2\right\rangle}
\newcommand{\finner}[2]{\left\langle#1, #2\right\rangle_{\mathrm{F}}}
\newcommand{\set}[2]{\left\{#1 \colon #2\right\}}
\newcommand{\brac}[1]{\left(#1\right)}
\newcommand{\sq}[1]{\left[#1\right]}

\newtheorem{ccounter}{ccounter}[section]
\newtheorem{thm}[ccounter]{Theorem}
\newtheorem{lem}[ccounter]{Lemma}

\newtheorem{prop}[ccounter]{Proposition}

\title{Cutoff profile for the balanced generalized Bernoulli--Laplace model}
\author{Ritesh Goenka\thanks{Mathematical Institute, University of Oxford, Oxford, United Kingdom, Email: goenka@maths.ox.ac.uk} \and Kyprianos-Iason Prodromidis\thanks{Department of Mathematics, Princeton University, Princeton, USA, Email: kp2702@princeton.edu}}
\date{}

\begin{document}

\maketitle

\begin{abstract}
    \noindent We study the balanced generalized Bernoulli--Laplace model introduced by Goenka, Hermon, and Schmid (2025) and establish a Gaussian cutoff profile under spectral conditions satisfied by all reversible chains and the cyclic shift chain. Our method adapts the approach of Olesker-Taylor and Schmid (2026) for the two-urn model, combining concentration around a deterministic trajectory before the cutoff window with an Ornstein--Uhlenbeck approximation near equilibrium. We also give an instance of this model that exhibits cutoff but has no limit profile. Further, we determine the mixing time up to an additive constant for every irreducible instance of the model, identifying a $\log \log n$ correction governed by the largest Jordan block size corresponding to the slowest decaying modes.
\end{abstract}

\section{Introduction}

The \emph{cutoff phenomenon} describes an abrupt transition to near-equilibrium for a sequence of Markov chains, occurring over a time window that is asymptotically negligible when compared with the mixing time. In recent years, increasing attention has been paid to the finer question of determining the \emph{limit profile}, which describes the limiting shape of the total variation distance within the cutoff window. Olesker-Taylor and Schmid~\cite{OTS} recently developed an approach to determining the limit profile using diffusion approximation, applying it to the Bernoulli--Laplace urn model. Their method combines concentration around a deterministic trajectory up to shortly before cutoff with convergence to an Ornstein--Uhlenbeck diffusion near equilibrium. Karageorgiou and Prodromidis~\cite{KP} subsequently adapted this approach to a two-dimensional process to find the limit profile in the high-temperature Curie--Weiss model.

The classical Bernoulli--Laplace model consists of two urns containing $n$ balls each, with $n$ red and $n$ black balls in total. At each step, a ball is chosen uniformly from each urn and the two selected balls are exchanged. Diaconis and Shahshahani~\cite{DS} established cutoff at $\frac{1}{4}n\log n$ steps, with a window of order $n$. More recently, Goenka, Hermon, and Schmid~\cite{GHS} introduced a multi-urn model encompassing several earlier generalizations, in which one ball is chosen uniformly from each urn and the selected balls are redistributed according to a random permutation. We study the balanced case of their model, with an equal number of urns and colors. In this setting, for a fixed number of urns and a fixed distribution on permutations, they established cutoff whenever the resulting Markov chain is irreducible.

Formally, fix an integer $d \ge 2$ and a probability measure $\mu$ on $S_d$. For each integer $n \ge 2$, consider $d$ urns containing $n$ balls each, with $d$ colors and $n$ balls of each color in total. At rate $n$, one ball is chosen uniformly from each urn, and a permutation $\sigma \sim \mu$ is sampled independently of these choices. For each $j \in [d]$, the selected ball from urn $j$ is then moved to urn $\sigma(j)$. This normalization ensures that each ball is selected at rate one. We write $\bX^{(n)} = (X^{(n)}(t))_{t \ge 0}$ for the resulting Markov chain, where $X_{i,j}^{(n)}(t)$ denotes the number of balls of color $i$ in urn $j$ at time $t$. Throughout, we assume that the measure $\mu$ is \emph{irreducible}, meaning that for every $i, j \in [d]$, there exist $m \in \N$ and $\sigma_1, \dots, \sigma_m \in \supp(\mu)$ such that $\sigma_1 \cdots \sigma_m(i) = j$. By Lemma~3.2 of \cite{GHS}, this is equivalent to the irreducibility of $\bX^{(n)}$. Throughout, all constants, including those implicit in asymptotic notation, may depend on $d$ and $\mu$. 

The state space $\Omega_n$ of $\bX^{(n)}$ consists of all $d \times d$ matrices with nonnegative integer entries and all row and column sums equal to $n$. Let $\pi_n$ denote the unique stationary distribution of $\bX^{(n)}$, and write $H_t^{(n)}$ for its transition kernel, so that for $t \ge 0$ and $x, y \in \Omega_n$, we have
\begin{equation*}
    H_t^{(n)}(x, y) = \P\brac{X^{(n)}(t) = y \,\mid\, X^{(n)}(0) = x}.
\end{equation*}
The worst-case total variation distance to stationarity at time $t \ge 0$ is
\begin{equation*}
    d_n(t) \colonequals \max_{x \in \Omega_n} \tv\brac{H_t^{(n)}(x, \cdot), \pi_n},
\end{equation*}
where $\tv(\nu, \pi)$ denotes total variation distance. For $\eps \in (0, 1)$, define the $\eps$-mixing time by
\begin{equation*}
    \tmix(\eps) \colonequals \inf \set{t \ge 0}{d_n(t) \le \eps}.
\end{equation*}
We say that the sequence of chains $\{\bX^{(n)}\}_{n \ge 2}$ exhibits cutoff if, for every $\eps \in (0, 1/2)$,
\begin{equation*}
    \lim_{n \to \infty} \frac{\tmix(\eps)}{\tmix(1 - \eps)} = 1.
\end{equation*}

The motion of a fixed ball across the urns defines a Markov chain on $[d]$, which we call the \emph{single-ball chain}. Its transition
matrix at selection times is $U \in \R^{d \times d}$, given by
\begin{equation*}
    U_{i,j} \colonequals
    \sum_{\sigma \in S_d} \mu(\sigma)
    \ind_{\{\sigma(i) = j\}}
\end{equation*}
for $i, j \in [d]$. The matrix $U$ is doubly stochastic and, by our assumption on $\mu$, irreducible.

Our main theorem identifies a Gaussian cutoff profile under two spectral conditions on $U$. Let $\Lambda$ denote the spectrum of $U$, with eigenvalues counted with algebraic multiplicity, and set
\begin{equation*}
    \gamma \colonequals 1 - \max_{\lambda \in \Lambda \setminus \{1\}} \Re(\lambda).
\end{equation*}
The Perron--Frobenius theorem implies $\gamma > 0$. We call an eigenvalue $\lambda \ne 1$ \emph{slow} if $\Re(\lambda) = 1 - \gamma$ and \emph{fast} otherwise, and write $\LS$ for the multiset of slow eigenvalues. When every slow eigenvalue has equal algebraic and geometric multiplicities, we may choose an invertible matrix $S \in \C^{d \times d}$ such that
\begin{equation*}
    U = S \diag\brac{1, J_{\mathrm{slow}}, J_{\mathrm{fast}}} S^{-1},
\end{equation*}
where $J_{\mathrm{slow}}$ is diagonal with entries given by $\LS$, and $J_{\mathrm{fast}}$ consists of the Jordan blocks corresponding to the fast eigenvalues. For such $U$, define
\begin{equation*}
    K_U(t) \colonequals S \diag\brac{0, \brac{e^{it\Im(\lambda)}}_{\lambda \in \LS}, 0} S^{-1}
\end{equation*}
for $t \in \R$, using the same ordering of the slow eigenvalues and with zero blocks corresponding to the eigenvalue $1$ and the fast eigenvalues. The matrix $K_U(t)$ is real and does not depend on the choice of Jordan basis. We write $\fnorm{\cdot}$ for the Frobenius norm.

\begin{thm}
\label{thm:main}
    Suppose that $U$ satisfies the following conditions:
    \begin{enumerate}
        \item Every slow eigenvalue has equal algebraic and geometric multiplicities.
        \item The function $t \mapsto \fnorm{K_U(t)}$ is constant on $\R$, with value denoted by $\kappa_U$.
    \end{enumerate}
    Then, for every $\theta \in \R$, we have
    \begin{equation*}
        d_n\brac{\frac{1}{2\gamma} \log n + \theta} \xrightarrow[n \to \infty]{} 2\Phi \brac{\frac{\kappa_U \sqrt{d}}{2} e^{-\gamma\theta}} - 1,
    \end{equation*}
    where $\Phi$ is the cumulative distribution function of a standard normal random variable.
\end{thm}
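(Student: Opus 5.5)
We adapt the Olesker-Taylor--Schmid scheme: concentration along a deterministic trajectory until shortly before cutoff, then an Ornstein--Uhlenbeck approximation near equilibrium.

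\textbf{Setup.} We work with the centered, rescaled fluctuation field
\begin{equation*}
    Y^{(n)}(t) \colonequals \frac{X^{(n)}(t) - n\mathbf{J}}{\sqrt{n}},
\end{equation*}
where $\mathbf{J}$ denotes the all-ones $d\times d$ matrix, so that $n\mathbf{J}$ is the stationary mean ($X_{i,j}\approx n$).

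\textbf{Step 1 (mean dynamics).} Compute the drift. A ball of color $i$ in urn $j$ is selected at rate $X_{i,j}/n\cdot n/n$ per ball, i.e.\ each ball at rate one, and it then moves according to $U$. Hence
\begin{equation*}
    \frac{\d}{\d t}\E[X(t)] = \E[X(t)](U - I)
\end{equation*}
exactly, the dynamics being linear in the single-ball marginals. It follows that
\begin{equation*}
    \E[X(t)] - n\mathbf{J} = (X(0) - n\mathbf{J})\, e^{t(U-I)}.
\end{equation*}
Decompose $e^{t(U-I)}$ via $S$. On the slow block (diagonal by condition 1) it equals
\begin{equation*}
    e^{-\gamma t}\, S\diag\brac{0, \brac{e^{it\Im\lambda}}_{\lambda\in\LS}, 0}S^{-1} = e^{-\gamma t}K_U(t),
\end{equation*}
while the fast part is $O(t^{m}e^{-(\gamma+\delta)t})$ for some $\delta>0$. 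At $t_n = \frac{1}{2\gamma}\log n + \theta$, the rescaled mean offset is $n^{-1/2}(x-n\mathbf{J})e^{-\gamma t_n}K_U(t_n) + o(1)$. Its Frobenius norm is at most
\begin{equation*}
    e^{-\gamma\theta}\, n^{-1}\fnorm{(x - n\mathbf{J})K_U(t_n)}.
\end{equation*}
The worst case should be a permutation-type start, for instance $x = nI$, so that $x - n\mathbf{J} = n(I - \mathbf{J}/d\cdot d)$, which is essentially $nI$ modulo the kernel. This yields $\fnorm{K_U(t_n)} = \kappa_U$, and condition 2 makes this quantity independent of $t_n$, which is what removes the oscillation. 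We must check that $x = nI$ (and its column permutations) maximizes $\fnorm{(x-n\mathbf{J})K_U}/n$ over $\Omega_n$. By Birkhoff, $x/n$ is a convex combination of permutation matrices and the norm is convex, so it suffices to consider permutation matrices $P$. Then $\fnorm{(P-\mathbf{J})K_U} = \fnorm{PK_U} = \kappa_U$, since $\mathbf{J}K_U = 0$ ($K_U$ annihilates the eigenvalue-$1$ direction).

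\textbf{Step 2 (concentration and OU limit).} Run the chain up to $s_n = \frac{1}{2\gamma}\log n - L$ for large fixed $L$. Using second-moment computations, again exact and linear (pairs of balls evolve with correlations $O(1/n)$), show that $Y(s_n)$ is within $o(1)$ plus $O(1)$ fluctuations of its mean, with mean of order $e^{\gamma L}$. Then, over the remaining time $L+\theta$, show that $Y$ converges to the OU process
\begin{equation*}
    \d Z = Z(U-I)\,\d t + \Sigma^{1/2}\,\d B,
\end{equation*}
restricted to the affine subspace of matrices with zero row and column sums. The stationary covariance should match the CLT for $\pi_n$, which is the hypergeometric-type law with covariance the projection onto that subspace (uniform matrices with given margins). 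Convergence of generators on smooth functions plus tightness gives this. To upgrade weak convergence to total variation we need a local CLT, or a coupling argument as in \cite{OTS}, comparing lattice laws with Gaussian densities. This is the main obstacle; I would follow \cite{OTS} by using a short final stretch of time where smoothing converts weak convergence into TV convergence through a local limit theorem for the chain, exploiting that it mixes locally on scale $\sqrt n$ in time $O(1)$.

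\textbf{Step 3 (Gaussian TV computation).} Compute $\tv(\cN(m,\Sigma_t),\cN(0,\Sigma_\infty))$. As $L\to\infty$ the covariance $\Sigma_t$ approaches $\Sigma_\infty$ (the projection, of rank $(d-1)^2$), and the mean $m$ is the Step 1 offset. For equal covariances, the TV distance equals $2\Phi(\|\Sigma^{-1/2}m\|/2)-1$. Since $\Sigma_\infty$ is the orthogonal projection scaled by $1$ (entries of variance about $1$ after centering), and the offset lies in its range, we get $\|\Sigma^{-1/2}m\| = \sqrt{d}\,\kappa_U e^{-\gamma\theta}$ after tracking the normalization; the factor $\sqrt d$ comes from the stationary covariance scaling $1/d$ per entry versus $n\mathbf{J}$. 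The lower bound uses the start $x = nI$ together with Steps 2 and 3. The upper bound uses the maximality argument of Step 1, uniformly over starts, where we also need the uniformity of Step 2 in $x$.
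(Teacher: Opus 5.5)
Your outline follows the paper's architecture: concentration at $\frac{1}{2\gamma}\log n - L$, an Ornstein--Uhlenbeck approximation, and the Gaussian total variation formula. But the step you call ``the main obstacle'' is left unresolved, and it is exactly where the paper needs a genuinely new ingredient.

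\textbf{The missing coupling step.} Weak convergence of the rescaled chain and of the rescaled stationary law gives only the lower bound, by testing on a half-space that is a continuity set of the Gaussian limit. For the upper bound, a Skorokhod coupling of $Y^{(n)}$ at time $L+\theta-\eps$ with $Y^{(n)}(\infty)$ only guarantees that the two unrescaled configurations are within $O(\eps^2\sqrt{n})$ of each other, with probability about $2-2\Phi(\cdot)$. They must then actually coalesce in a short extra time. You rightly expect a short final stretch to do this smoothing, but the statement you need is a uniform local estimate such as
\begin{equation*}
\tv\brac{H_\eps^{(n)}(x,\cdot),H_\eps^{(n)}(y,\cdot)} \le \frac{C_0}{\sqrt{n\eps}}\fnorm{x-y}
\end{equation*}
for all $x,y$ in a macroscopic Frobenius ball around $n\Pi$ (Proposition~\ref{prop:BBL}).

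A ``local CLT for the chain'' or a pointer to \cite{OTS} does not supply this. The fluctuations here are $(d-1)^2$-dimensional, so nearby copies need not meet on their own, and no local limit theorem for this interacting chain is available off the shelf. The paper proves the estimate by adapting Proposition~3.10 of \cite{BBL}:
\begin{itemize}
\item truncate the rates by a metric projection so that every jump has rate at least $nr_0$;
\item reserve rate-$nr_0$ jumps in the direction $J$ of the initial discrepancy in both copies;
\item couple $\Pois(nr_0s)$ with $1+\Pois(nr_0s)$ maximally, at cost $O((ns)^{-1/2})$;
\item couple the remaining jumps maximally, controlling the unmatched ones through Lipschitz dependence of the rates on the state.
\end{itemize}
This yields $\Delta_n(t)\le C_0/\sqrt{r_0nt}+\frac14\Delta_n(t/4)$ for single-jump discrepancies. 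Lemma~\ref{alg-dist}, $\dis_n(x,y)\le c\fnorm{x-y}$, extends it to arbitrary pairs. Without some argument of this kind, your upper bound is not established.

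\textbf{Normalization error in Step 3.} The stationary mean is $n\Pi=n\mathbf{J}/d$ (entries near $n/d$), not $n\mathbf{J}$. The limiting stationary covariance of $Y^{(n)}$ is $d^{-1}I_\cH$, so each entry has variance about $(d-1)^2/d^3$; it is not the orthogonal projection itself. With the covariance you state, the equal-covariance Gaussian formula gives $2\Phi(\kappa_U e^{-\gamma\theta}/2)-1$. The factor $\sqrt d$ in the theorem comes exactly from $\Sigma_\infty^{-1/2}=\sqrt{d}\,I_\cH$, so it must be derived rather than asserted; the paper does this with a combinatorial CLT for the uniform assignment of balls to slots.

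\textbf{Worst start.} Your Birkhoff-convexity reduction is a workable substitute for Lemma~\ref{lem:worst-start}. However, it forces you to run the whole argument, including the missing coupling step, uniformly over starting states, along subsequences on which $x_n/n$ and the phases $e^{it_n\Im(\lambda)}$ converge. The paper's lemma, proved via the labeled chain, shows that $nI$ maximizes the distance at every time and removes this burden.
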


Theorem~\ref{thm:main} provides a partial answer to Question~1.9 of Goenka, Hermon, and Schmid~\cite{GHS} by identifying an explicit Gaussian cutoff profile under the above spectral assumptions. Condition~1 holds on an open dense subset of full Lebesgue measure in the simplex of probability measures on $S_d$. This follows by observing that the discriminant of the characteristic polynomial of $U$ is a polynomial in the weights of $\mu$ that is not identically zero on the simplex, and that $U$ has distinct eigenvalues outside its zero set. A short argument based on Lemma~\ref{lem:subseq-profile} also shows that, under condition~1, condition~2 is necessary for the existence of a limit profile. In Section~\ref{subsec:examples}, we give an example satisfying condition~1 for which no limit profile exists. On the other hand, we show that Theorem~\ref{thm:main} applies to all reversible chains and derive an explicit cutoff profile for the cyclic shift chain studied in~\cite{GHS}.

Our next result determines the mixing time up to an additive constant for every irreducible $\mu$, without either spectral assumption of Theorem~\ref{thm:main}. When condition~1 fails, the mixing time includes an additional term of order $\log\log n$. Let $r+1$ be the largest size of a Jordan block of $U$ corresponding to a slow eigenvalue.

\begin{prop}
\label{prop:mixing-time}
    For every fixed $\eps \in (0, 1)$, we have
    \begin{equation*}
        \tmix(\eps) = \frac{\log n}{2\gamma} + \frac{r}{\gamma} \log \log n + O_{\eps}(1).
    \end{equation*}
    In particular, the sequence $\{\bX^{(n)}\}_{n \ge 2}$ exhibits total variation cutoff with an $O(1)$ window.
\end{prop}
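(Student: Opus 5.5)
We prove matching upper and lower bounds on $d_n(t)$ at times $t_n(\theta) \colonequals \frac{\log n}{2\gamma} + \frac{r}{\gamma}\log\log n + \theta$, both driven by the mean trajectory. Write $x = X^{(n)}(0)$ and $\bar X(t) = \E_x[X^{(n)}(t)]$. The number of balls of color $i$ in urn $j$ is a sum over balls of indicators. Each ball moves according to the continuous-time single-ball chain with generator $U - I$. Hence
\begin{equation*}
    \bar X(t) - \tfrac{n}{d}\mathbf{1}\mathbf{1}^{\top} = \brac{x - \tfrac{n}{d}\mathbf{1}\mathbf{1}^{\top}} e^{t(U-I)}.
\end{equation*}
Using the Jordan decomposition of $U$, the slow part of $e^{t(U-I)}$ has norm of order $t^{r} e^{-\gamma t}$, up to a bounded oscillating factor. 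The fast part is $O(e^{-(\gamma+\delta)t})$ for some $\delta > 0$. At $t = t_n(\theta)$ we have $t^r e^{-\gamma t} \asymp n^{-1/2} e^{-\gamma\theta}$. So the mean deviation is of order $\sqrt n\, e^{-\gamma\theta}$, on the scale of equilibrium fluctuations.

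\textbf{Lower bound.} We pick a worst starting state $x$, for instance one with the colors sorted into urns. We pick a test direction $v$ in the generalized eigenspace of the top Jordan block of a slow eigenvalue, taking the real or imaginary part when $\lambda$ is complex. The statistic is $f(X) = \Re\inner{X}{w}$ for a suitable fixed matrix $w$ built from $v$, chosen so that $x - \frac{n}{d}\mathbf{1}\mathbf{1}^\top$ has a nonzero component along the top of that Jordan chain.

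The oscillation $e^{it\Im\lambda}$ could make this component small at particular times. We handle it by choosing, for each $t$, the test function from the two-dimensional real span, so that a constant fraction of $t^r e^{-\gamma t}$ survives. Under $\pi_n$, $f$ has mean $0$ and variance $O(n)$, since $\pi_n$ is the uniform-allocation (multivariate hypergeometric) measure. Under $H_t(x,\cdot)$, the variance of $f$ is also $O(n)$, by a second-moment computation using pairwise ball correlations as in \cite{GHS}. Chebyshev's inequality (Wilson's method) then gives $d_n(t_n(\theta)) \ge 1 - \eps$ for $\theta \le -C_\eps$.

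\textbf{Upper bound.} For $\theta \ge C_\eps$, we follow the two-stage scheme of \cite{OTS}, but only need crude bounds. First, run until time $t_n(\theta) - s$ for a large constant $s$. At that time the chain is concentrated: $X - \frac n d\mathbf{1}\mathbf{1}^\top = O(\sqrt n\, e^{\gamma s} e^{-\gamma\theta})$ in the relevant norm, with high probability. Second, from any state whose centered deviation is $\le M\sqrt n$, show the chain is $\eps$-close to $\pi_n$ after a further constant time $s$, with closeness uniform as $M e^{-\gamma s}\to 0$.

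For the second stage we use the Ornstein--Uhlenbeck approximation that the paper develops for Theorem~\ref{thm:main}. Such a local CLT or coupling step is insensitive to Jordan structure, since $s$ is fixed. Alternatively, we couple two copies by matching balls, driving the chain from a deviation of $\varepsilon\sqrt n$ to equilibrium. Combining the two stages, and taking $\theta$ large relative to $s$, gives $d_n(t_n(\theta)) \le \eps$.

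\textbf{Main obstacle.} The hard step is the upper-bound second stage without condition~1. Nontrivial Jordan blocks give polynomial factors, and we must check that the polynomial growth $t^r$ is only relevant on the first, deterministic stage. Over a constant final window, $e^{s(U-I)}$ is merely a fixed contraction on the mean-zero space. So the required total variation bound from $O(\sqrt n)$ deviations should follow from a local limit theorem for $X^{(n)}(s)$, uniform over such initial states, together with total variation continuity of Gaussians in the mean. The $O(1)$ window then follows immediately from the two bounds.
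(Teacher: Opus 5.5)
Your overall architecture is sound and matches the paper's. Both proofs identify $\tmix(\eps)$, up to $O(1)$, with the first time the mean $nP_t$ comes within a constant multiple of $\sqrt n$ of $n\Pi$ (Lemma~\ref{lem:spectral-scale}). Both prove the upper bound by concentration followed by constant-time mixing from the center.

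Your lower bound is a different but valid route. Let $u$ be the first row of $S^{-1}$ and $s$ the last column of $S$ belonging to a slow Jordan block of size $r+1$. Then $uXs$ has mean exactly $ne^{(\lambda-1)t}t^r/r!$ from $nI$ and mean $0$ under $\pi_n$. Its variance is $O(n)$ under both laws, by Proposition~\ref{prop:concentration} and the variance formula in Lemma~\ref{lem:stationary-limit}. So Chebyshev applied to its real or imaginary part gives a self-contained argument. The paper instead imports the center estimates of Lemma~4.8 of \cite{GHS}.

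The gap is the second stage of your upper bound, which is exactly what produces the $O(1)$ window. You need that for each $M,\eps>0$ there is $T$ such that $\tv(H_T^{(n)}(x,\cdot),\pi_n)\le\eps$ for all $x$ with $\fnorm{x-n\Pi}\le M\sqrt n$ and all large $n$. You assert this rather than prove it, and none of the three suggested mechanisms delivers it as stated:
\begin{enumerate}
    \item Proposition~\ref{prop:diffusion-convergence} is only weak convergence in the Skorokhod topology, so it says nothing about total variation for the lattice-valued chain.
    \item A local limit theorem for $X^{(n)}(T)$, uniform over such starting states, is not available off the shelf and would be a substantial result in its own right.
    \item The ball-matching coupling is not specified.
\end{enumerate}

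The paper fills this step by citing equation~(86) of \cite{GHS}. You could instead use the paper's own tools, none of which needs condition~1. For $\fnorm{y_0}\le M$, Proposition~\ref{prop:diffusion-convergence} gives $D(T)\sim\cN_\cH(y_0P_T,\Sigma_T)$ with $\fnorm{y_0P_T}=O(MT^re^{-\gamma T})$ and $\opnorm{\Sigma_T-d^{-1}I_\cH}=O(T^{2r}e^{-2\gamma T})$. By Corollary~3 of \cite{BU}, this law is therefore uniformly close in total variation to $\cN_\cH(0,d^{-1}I_\cH)$ for large $T$. Skorokhod coupling together with Lemma~\ref{lem:stationary-limit} then puts the two chains within $\eps^2\sqrt n$ of each other with probability $1-O(\eps)$. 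Proposition~\ref{prop:BBL} merges them in a further time $\eps$, via the compactness argument used for the upper bound in Lemma~\ref{lem:subseq-profile}.

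Separately, the uniformity you state, ``as $Me^{-\gamma s}\to0$,'' is not the right one. At time $t_n(\theta)-s$ the centered state is $O_P(\sqrt n(1+e^{\gamma(s-\theta)}))$, not $O(\sqrt n\,e^{\gamma(s-\theta)})$, because the $\sqrt n$ fluctuations never shrink. Moreover, even from a state within $O(1)$ of $n\Pi$, the distance to $\pi_n$ after a fixed time $s$ stays bounded below by $\tv(\cN_\cH(0,\Sigma_s),\cN_\cH(0,d^{-1}I_\cH))>0$, since total variation is lower semicontinuous under weak convergence. The correct order of quantifiers is: fix $M$ from concentration, then take $s=T(M,\eps)$ large, then take $\theta\ge s$.
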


By~\cite{GHS}*{Lemma~1.6}, both Theorem~\ref{thm:main} and Proposition~\ref{prop:mixing-time} also hold for the labeled version of our chains, in which the balls are all distinguishable. When the urn chains in the sequence $\{\bX^{(n)}\}_{n \ge 2}$ are reversible, $U$ is symmetric by~\cite{GHS}*{Lemma~3.4}, so condition~1 of Theorem~\ref{thm:main} holds and hence $r = 0$. In this case, the estimate $\tmix(\eps) = (\log n)/(2\gamma) + O_\eps(1)$ already follows from \cite{GHS}. In the nonreversible case, their general spectral estimate leaves an additive $O_\eps(\log\log n)$ error in the mixing time. Proposition~\ref{prop:mixing-time} identifies the precise $\log\log n$ correction and bounds the remainder by $O_\eps(1)$. In Section~\ref{subsec:examples}, we construct a family of examples showing that this correction is necessary in general.

\subsection{Examples}
\label{subsec:examples}

Both conditions of Theorem~\ref{thm:main} hold whenever $U$ is normal. Indeed, $U$ can then be diagonalized by a unitary matrix, so condition~1 holds. By unitary invariance of the Frobenius norm,
\begin{equation*}
    \fnorm{K_U(t)}^2 = \sum_{\lambda \in \LS} \abs{e^{it\Im(\lambda)}}^2 = \abs{\LS}
\end{equation*}
for all $t \in \R$. Thus, condition~2 holds with $\kappa_U = \sqrt{\abs{\LS}}$, where eigenvalues are counted with algebraic multiplicity. We remark that normality is not necessary for the existence of a limit profile. Indeed, if condition~1 holds and all slow eigenvalues are real, then $K_U(t)$ is constant in $t$, so condition~2 also holds.\vspace{3pt}

\textbf{Reversible chains.} By~\cite{GHS}*{Lemma~3.4}, the balanced urn chain is reversible if and only if $\mu(\sigma) = \mu(\sigma^{-1})$ for all $\sigma \in S_d$. In this case, $U$ is symmetric, so both conditions of Theorem~\ref{thm:main} hold. Note that this also includes the chains arising from conjugacy-invariant measures.

In particular, consider the \emph{mean-field chain}, in which $\mu$ is uniform on the transpositions in $S_d$. The only nontrivial eigenvalue of $U$ is $1-2/(d-1)$, with multiplicity $d-1$, so
\begin{equation*}
    \gamma = \frac{2}{d-1} \quad \text{ and } \quad \kappa_U = \sqrt{d-1}.
\end{equation*}
Theorem~\ref{thm:main} therefore gives, for every $\theta \in \R$,
\begin{equation*}
    d_n \brac{\frac{d-1}{4}\log n+\theta} \xrightarrow[n\to\infty]{} 2\Phi \brac{\frac{\sqrt{d(d-1)}}{2} e^{-2\theta/(d-1)}} - 1.
\end{equation*}
This profile was previously obtained by Shen~\cite{Shen} using Fourier-analytic methods and combinatorial central limit theorems.\vspace{3pt}

\textbf{The cyclic shift chain.} Suppose $d \ge 3$ and $\mu$ is concentrated on the cycle $(1\,2\,\dots\,d)$. The urn chain is nonreversible, but $U$ is a permutation matrix and hence normal. Its eigenvalues are the $d$th
roots of unity, and the two slow eigenvalues are
$e^{2\pi i/d}$ and $e^{-2\pi i/d}$. Consequently,
\begin{equation*}
    \gamma = 1 - \cos\brac{\frac{2\pi}{d}} \quad \text{ and } \quad \kappa_U = \sqrt{2}.
\end{equation*}
Theorem~\ref{thm:main} therefore gives, for every $\theta \in \R$,
\begin{equation*}
    d_n\brac{\frac{\log n}{4 \sin^2(\pi/d)}+\theta} \xrightarrow[n\to\infty]{} 2\Phi \brac{\sqrt{\frac d2} e^{-2 \theta \sin^2(\pi/d)}} - 1.
\end{equation*}
Thus, non-real slow eigenvalues do not necessarily prevent the existence of a cutoff profile.\vspace{3pt}

\textbf{Nonexistence of a limit profile.}
Let $d = 3$ and let $\mu$ assign probabilities $2/3$ and $1/3$ to $(123)$ and $(12)$, respectively. The corresponding single-ball
transition matrix is
\begin{equation*}
    U = \frac{1}{3} \begin{pmatrix}
        0&3&0\\
        1&0&2\\
        2&0&1
    \end{pmatrix},
\end{equation*}
whose eigenvalues are $1$ and $-1/3 \pm i\sqrt{2}/3$. Thus, condition~1 holds, $\gamma = 4/3$, and $r = 0$. However, a direct calculation gives, for $t \in \R$,
\begin{equation*}
    \fnorm{K_U(t)}^2 = 2 + 2\sin^2\brac{\frac{\sqrt{2}}{3}t},
\end{equation*}
so condition~2 fails. At time $t = (3/8)\log n$, the phase in the above expression is $(\sqrt{2}/8)\log n$, which approaches both $0$ and $\pi/2$ modulo $\pi$ along suitable subsequences. Lemma~\ref{lem:subseq-profile} therefore implies that $d_n((3/8)\log n)$ does not converge. Nevertheless,
Proposition~\ref{prop:mixing-time} gives
\begin{equation*}
    \tmix(\eps)
    =\frac38\log n+O_\eps(1)
\end{equation*}
for every fixed $\eps\in(0,1)$.
Hence, the chain exhibits cutoff with an $O(1)$ window and no
$\log\log n$ correction, but has no limit profile at this centering.\vspace{3pt}

\textbf{Necessity of the $\log \log n$ term.} Fix an integer $k \ge 2$, set $d = 2^k$, and identify the urns with the binary strings in $\{0,1\}^k$. Let $\mu$ be uniform on the two permutations
\begin{align*}
    \sigma_0(x_1,\dots,x_k)
    &= (x_2,\dots,x_k,x_1),\\
    \sigma_1(x_1,\dots,x_k)
    &= (x_2,\dots,x_k,1-x_1).
\end{align*}
At each update, the single-ball chain therefore shifts the string one place to the left and appends an independent uniform bit. After $k$ steps, all bits have been refreshed, whereas after $k-1$ steps the first bit is still the initial last bit. Hence, $U^k = d^{-1}\ind\ind^\top$, while $U^{k-1} \ne d^{-1}\ind\ind^\top$. Consequently, $U$ is irreducible and its restriction to the subspace of vectors with zero coordinate sum is nilpotent of index $k$. Thus, all nontrivial eigenvalues are zero, and the largest Jordan block corresponding to these eigenvalues has size $k$. Consequently, $\gamma = 1$ and $r = k-1$, and so Proposition~\ref{prop:mixing-time} gives, for every fixed $\eps \in (0,1)$,
\begin{equation*}
    \tmix(\eps) = \frac{1}{2} \log n + (k-1) \log \log n + O_\eps(1).
\end{equation*}
This yields an example with $r$ equal to any prescribed positive integer.

\subsection{Proof ideas}
\label{subsec:proof-ideas}

Our proof of Theorem~\ref{thm:main} follows the diffusion approximation approach of Olesker-Taylor and Schmid~\cite{OTS}. Write $P_t\colonequals e^{t(U-I)}$ for the single-ball transition kernel, and let $\Pi$ be the $d \times d$ matrix whose entries are all $1/d$. Starting from the worst-case configuration $X^{(n)}(0) = nI$, the mean is $\E_{nI} X^{(n)}(t) = nP_t$ and $\E_{nI} \fnorm{X^{(n)}(t)-nP_t}^2 = O(n)$ uniformly in time. The distance between the mean configuration $nP_t$ and the stationary mean $n\Pi$ is $n \fnorm{P_t-\Pi}$. At times $t = t(n)$ satisfying $n\fnorm{P_t-\Pi} = \omega(\sqrt{n})$, the second-moment bound shows that fluctuations around $nP_t$ are negligible relative to this distance with high probability.

The remaining task is to understand the chain when the distance between the mean configuration and stationary mean becomes comparable to the $\sqrt{n}$ scale of the fluctuations. Under condition~1, the Jordan decomposition gives, at times $t = (\log n)/(2\gamma) + \theta$ with fixed $\theta \in \R$,
\begin{equation*}
    \fnorm{n(P_t - \Pi) - \sqrt{n} e^{-\gamma\theta} K_U(t)} = o(\sqrt{n}).
\end{equation*}
This describes the mean displacement from the stationary mean with an error smaller than the fluctuation scale, but identifying the cutoff profile also requires a description of the distribution around the mean. To obtain this, we center the chain at $n\Pi$, scale by $\sqrt{n}$, and approximate the rescaled dynamics near equilibrium by an Ornstein--Uhlenbeck process. The limiting diffusion has a centered Gaussian stationary distribution with covariance $d^{-1}$ times the identity on the space of real $d \times d$ matrices with zero row and column sums. The rescaled stationary distributions of the urn chain also converge to this same Gaussian law.

To identify the limit profile, we start the diffusion approximation at time $(\log n)/(2\gamma) - C$, with $C > 0$ large and fixed. Since the rescaled means may oscillate, we first work along subsequences of $n$ on which they converge. The weak convergence results discussed in the previous paragraph give the lower bound on total variation, but the upper bound requires an additional coupling step. We adapt the argument of~\cite{BBL}*{Proposition~3.10} to show that two copies started near equilibrium at distance $o(\sqrt{n})$ can be coupled to agree with high probability after any fixed time $\delta > 0$. After passing to the limit along such a subsequence, we let $C\to\infty$ so that the covariance of the approximating Gaussian converges to that of the stationary Gaussian, and let $\delta \to 0^+$ to remove the additional coupling time. This yields matching bounds in terms of the total variation distance between two Gaussians with the same isotropic covariance. This distance depends only on the Frobenius norm of their mean difference, so condition~2 makes the subsequential limits agree and yields the claimed profile, even when the mean displacement itself oscillates.

For Proposition~\ref{prop:mixing-time}, neither spectral assumption is needed. The Jordan decomposition gives
\begin{equation*}
    \fnorm{P_t-\Pi} = \Theta(t^r e^{-\gamma t})
\end{equation*}
as $t \to \infty$, where the factor $t^r$ comes from the largest slow Jordan blocks. The estimates of Goenka, Hermon, and Schmid~\cite{GHS}, together with equivalence of norms in fixed dimension, locate the mixing time within $O_\eps(1)$ of the first time the mean configuration $nP_t$ comes within a fixed multiple of $\sqrt{n}$ of the stationary mean $n\Pi$ in Frobenius norm. At this time, $\sqrt{n}\,t^r e^{-\gamma t} = \Theta(1)$. Solving for $t$ gives the $(r/\gamma) \log \log n$ correction, as shown in Lemma~\ref{lem:spectral-scale}.

\subsubsection*{Organization}

The remainder of the paper is organized as follows. In Section~\ref{sec:properties}, we establish basic properties of the chain, prove concentration around its deterministic mean trajectory, and obtain a precise mixing time estimate. In Section~\ref{sec:diffusion}, we establish convergence of the rescaled chain to an Ornstein--Uhlenbeck diffusion. Finally, in Section~\ref{sec:profile}, we combine these results with a local coupling argument to prove our main theorem.

\subsubsection*{Acknowledgments}

The authors thank Omer Angel, Jonathan Hermon, Dominik Schmid, and Allan Sly for helpful discussions. RG is supported by a joint Clarendon and Exeter College SKP scholarship.

\subsubsection*{Statement on AI use}

The strategy used to show the main result is entirely due to the authors. A conversation with ChatGPT (GPT-5.6 Sol) helped us find \cite{BBL}, and more specifically its Proposition~3.10, an important ingredient to complete our argument, as we explain in Section~\ref{sec:profile}. ChatGPT (GPT-6 Astra Pro) was also used to help polish the exposition of this paper.

\section{Dynamics and mixing time}
\label{sec:properties}

\subsection{Basic properties}
\label{subsec:basic-prop}

We first show that the configuration $nI$, in which urn $i$ contains all balls of color $i$ for every $i \in [d]$, maximizes the total variation distance from stationarity at every time. Consequently, to determine the limit profile, it suffices to consider the chain started from $nI$.

\begin{lem}
\label{lem:worst-start}
    For every $t \ge 0$, we have
    \begin{equation*}
        d_n(t) = \tv(H_t^{(n)}(nI, \cdot), \pi_n).
    \end{equation*}
\end{lem}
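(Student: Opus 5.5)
We prove this through the labeled version of the chain, using projection and symmetry. In the labeled chain there are $dn$ distinguishable balls. At rate $n$, one ball is chosen uniformly from each urn and the balls are moved according to an independent $\sigma \sim \mu$. A labeled configuration is a map $\eta$ from balls to urns in which every urn receives exactly $n$ balls. The colored chain $\bX^{(n)}$ is the image of the labeled chain under a map $\phi$. To define $\phi$, fix the coloring that assigns color $i$ to balls $(i-1)n+1,\dots,in$. Then $\phi(\eta)_{i,j}$ is the number of color-$i$ balls that $\eta$ places in urn $j$. Lumpability holds because the dynamics do not depend on ball labels. The stationary law of the labeled chain is uniform, since its transition rates are doubly stochastic. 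Its pushforward under $\phi$ is therefore $\pi_n$.

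\textbf{Key step.} Fix any $x \in \Omega_n$. We aim to exhibit $x$ as the image of the chain started from $nI$ under a further projection. Choose a labeled configuration $\eta_0$ with $\phi(\eta_0)=nI$, that is, urn $i$ holds exactly the color-$i$ balls. Now recolor the balls. Let $\psi$ be the map that records, for each urn $j$ and each new color $k$, how many balls of color $k$ sit in urn $j$, where the balls starting in urn $i$ are split into new colors in proportions $x_{i,1},\dots,x_{i,d}$. Concretely, inside urn $i$ we give color $k$ to exactly $x_{i,k}$ of the balls. This recoloring has $n$ balls of each new color, because the column sums of $x$ equal $n$. Under the new coloring, the initial matrix is exactly $x$. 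We write $\eta_t$ for the labeled chain started from $\eta_0$.

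The argument then runs in the following order.

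\begin{enumerate}
\item The law of $\phi(\eta_t)$ is $H_t^{(n)}(nI,\cdot)$.
\item The law of $\psi(\eta_t)$ is $H_t^{(n)}(x,\cdot)$, by lumpability with respect to the new coloring.
\item Both stationary images are equal to $\pi_n$.
\item The map $\psi$ factors through $\phi$. Knowing $\phi(\eta)$ does not determine $\psi(\eta)$. However, conditionally on $\phi(\eta_t)$, the labeled configuration $\eta_t$ is uniform among the preimages. This holds for all $t$ when started from a fixed $\eta_0$ up to relabelings within color classes: we instead start from $\eta_0$ drawn uniformly from $\phi^{-1}(nI)$, and exchangeability within color classes is preserved by the dynamics. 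Hence $\psi(\eta_t)=F(\phi(\eta_t))$ for a Markov kernel $F$, which assigns colors within each color class uniformly at random. The same kernel maps $\pi_n$ to $\pi_n$, because the uniform measure is exchangeable.
\item Total variation contracts under Markov kernels. This gives $\tv(H_t^{(n)}(x,\cdot),\pi_n)\le \tv(H_t^{(n)}(nI,\cdot),\pi_n)$.
\end{enumerate}

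Taking the maximum over $x$ then gives equality, because $nI\in\Omega_n$.

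\textbf{Main obstacle.} The delicate point is step 4, the claim that the law of $\eta_t$ given $\phi(\eta_t)$ is uniform within color classes. The proof is as follows. Let $\tau$ be any permutation of the balls that preserves the original color classes. The labeled dynamics commute with $\tau$. The initial law, uniform on $\phi^{-1}(nI)$, is $\tau$-invariant. So the law of $\eta_t$ is $\tau$-invariant for every such $\tau$. The group of such $\tau$ acts transitively on each fiber $\phi^{-1}(y)$, so the conditional law of $\eta_t$ given $\phi(\eta_t)=y$ is uniform on that fiber. The new coloring of step 2 is a fixed function of ball labels, but we take it to be random. Inside each original color class, the sets of balls receiving each new color are chosen uniformly with the prescribed sizes $x_{i,k}$. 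This changes neither the law of the initial matrix, which is still $x$, nor the law of the chain, since the initial state is deterministic given the recoloring. The process $\psi(\eta_t)$ therefore still has law $H_t^{(n)}(x,\cdot)$. Given $\phi(\eta_t)$, it is obtained by a kernel $F$ that depends only on $x$ and not on $t$. This justifies the contraction in step 5.
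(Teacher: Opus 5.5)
Your argument is correct and is essentially the paper's proof. The paper fixes a labeled configuration $\eta$ and recolors the balls of each initial urn according to $x$. It then bounds $\tv(H_t^{(n)}(x,\cdot),\pi_n)$ by the labeled chain's distance via projection, and cites Lemma~1.6 of \cite{GHS} for the equality of that distance with $\tv(H_t^{(n)}(nI,\cdot),\pi_n)$. That equality amounts to the same fiber-uniformity (exchangeability within initial urns) that you prove directly and package as the Markov kernel $F$.

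There is one harmless indexing slip. With rows indexing colors and columns indexing urns, giving new color $k$ to $x_{i,k}$ balls of urn $i$ produces the initial matrix $x^\top$, not $x$. You should give new color $k$ to $x_{k,i}$ balls of urn $i$ instead, or simply note that $\Omega_n$ is closed under transposition.
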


\begin{proof}
    Consider the labeled version of the chain, in which all $dn$ balls are distinguishable (see~\cite{GHS}*{Section~1.3} for a formal definition). Denote its state space by $\widetilde{\Omega}_n$, its transition kernel by $\widetilde{H}_t^{(n)}$, and its stationary distribution by $\widetilde{\pi}_n$. The latter is uniform on $\widetilde{\Omega}_n$.

    Fix an arbitrary labeled configuration $\eta \in \widetilde{\Omega}_n$, and for $j \in [d]$, let $B_j \colonequals B_j(\eta)$ be the set of $n$ balls in urn $j$ in the configuration $\eta$. For any coloring $c$ of the balls with exactly $n$ balls of each color, define the projection $\varphi_c \colon \widetilde{\Omega}_n \to \Omega_n$ by
    \begin{equation*}
        \varphi_c(x)_{ij} \colonequals \#\set{b}{c(b) = i,\ b \text{ is in urn } j \text{ in configuration } x}.
    \end{equation*}
    The projected process has the dynamics of the balanced chain. By considering the coloring $c_0$ which assigns color $j$ to every ball in $B_j$ for each $j \in [d]$, one can show that
    \begin{equation}
    \label{eq:labelled-tv}
        \tv\bigl(\widetilde{H}_t^{(n)}(\eta,\cdot), \widetilde{\pi}_n\bigr) = \tv\bigl(H_t^{(n)}(nI,\cdot),\pi_n\bigr).
    \end{equation}
    See the proof of \cite{GHS}*{Lemma~1.6} for a detailed justification of \eqref{eq:labelled-tv}. Now fix any $x \in \Omega_n$. Choose a coloring $c$ such that, for each $i, j \in [d]$, exactly $x_{ij}$ balls in $B_j$ receive color $i$. Such a coloring exists because the row and column sums of $x$ are all equal to $n$. By construction, $\varphi_c(\eta) = x$. Since total variation distance cannot increase under projections, we obtain
    \begin{equation*}
        \tv\bigl(H_t^{(n)}(x, \cdot), \pi_n\bigr) \le \tv\bigl(\widetilde{H}_t^{(n)}(\eta, \cdot), \widetilde{\pi}_n\bigr) = \tv\bigl(H_t^{(n)}(nI,\cdot), \pi_n\bigr),
    \end{equation*}
    where the last equality follows from \eqref{eq:labelled-tv}. Taking supremum over $x \in \Omega_n$ proves the claim.
\end{proof}

Let $e_1, \dots, e_d$ denote the standard basis column vectors of $\R^d$, and set $E_{i,j} \colonequals e_i e_j^\top$. Let $\cJ$ denote the set of possible jumps of the chain at an update. Every $J \in \cJ$ is of the form
\begin{equation}
\label{jump-form}
    J = \sum_{j=1}^d (E_{i_j,\sigma(j)} - E_{i_j,j}),
\end{equation}
where $\sigma \in \supp(\mu)$ and for each urn $j$, $i_j$ is the chosen color from that urn. This implies that for $J \in \cJ$, we have $\onenorm{J} \le 2d$ and $\fnorm{J} \le \sqrt{2d}$, where $\onenorm{\cdot}$ denotes the entrywise $\ell^1$ norm. Write $q_n(x,y)$ for the generator entries of $\bX^{(n)}$. For $x \in \Omega_n$ and $J \in \cJ \setminus \{0\}$, we have
\begin{equation}
\label{eqn:trans}
    \frac{1}{n} q_n(x,x+J) = \sum_{\substack{\sigma,i_1,\dots,i_d\\ J\ \text{satisfies}\ \eqref{jump-form}}} \mu(\sigma) \prod_{j=1}^d \frac{x_{i_j,j}}{n}.
\end{equation}

In Section~\ref{sec:profile}, we will need this formula for all nonnegative real matrices $x$ whose row and column sums are all equal to $n$. Fix $0 < \delta_0 < d^{-1}$. For all sufficiently large $n$, if $\fnorm{x-n\Pi}\le\delta_0n$, then $x+J$ is nonnegative for every $J \in \cJ$ and has the same row and column sums as $x$. Moreover, for every $J\in\cJ\setminus\{0\}$, we have
\begin{equation}
\label{rate-blue}
    \frac{1}{n}q_n(x,x+J) \ge \brac{\min_{\sigma \in \supp(\mu)}\mu(\sigma)} \brac{d^{-1}-\delta_0}^d \colonequals r_0.
\end{equation}

We next show that the graph distance between two configurations is bounded above by a constant multiple of the Frobenius norm of their difference.

\begin{lem}
\label{alg-dist}
    Let $\dis_n$ denote the graph distance on $\Omega_n$, where distinct configurations $x, y$ are adjacent if and only if $q_n(x, y) > 0$ or $q_n(y, x) > 0$. There exists a constant $c > 0$ such that
    \begin{equation*}
        \dis_n(x, y) \le c \fnorm{x-y}
    \end{equation*}
    for all $x, y \in \Omega_n$.
\end{lem}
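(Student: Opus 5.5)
We will show that from any $x \ne y$ in $\Omega_n$ we can reach a configuration $x'$ with $\onenorm{x'-y} \le \onenorm{x-y} - 2$ using at most a bounded number $L$ of steps, where $L$ depends only on $d$ and $\mu$. Iterating this at most $\onenorm{x-y}/2$ times gives $\dis_n(x,y) \le (L/2)\onenorm{x-y} \le (Ld/2)\fnorm{x-y}$, by equivalence of the $\ell^1$ and Frobenius norms on $d\times d$ matrices. Since $\dis_n$ is symmetric in $x$ and $y$, it is enough to use transitions $x\to x+J$ with $q_n(x,x+J)>0$, that is, forward moves of the chain.

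\textbf{Reduction.} Let $D \colonequals y - x$. This is a nonzero integer matrix with zero row and column sums. So there are a color $i$ and urns $j\ne k$ with $D_{i,j}<0<D_{i,k}$, meaning $x_{i,j} > y_{i,j} \ge 0$ and $x_{i,k} < y_{i,k}$. Column $k$ of $D$ sums to zero, so some color $i'\neq i$ has $D_{i',k}<0$, giving $x_{i',k}>y_{i',k}\ge0$. The target move is a ``swap'': one ball of color $i$ goes from urn $j$ to urn $k$, and the net effect on all other entries is either zero or one ball of color $i'$ going from $k$ to $j$. More precisely, we aim for the net displacement $E_{i,k}-E_{i,j}+E_{i',j}-E_{i',k}$. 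Since $D_{i,j}<0$, $D_{i,k}>0$ and $D_{i',k}<0$, this decreases $\onenorm{D}$ by at least $2$: three entries shrink by $1$ and one entry changes by $1$.

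\textbf{Realizing the swap with bounded moves.} By irreducibility of $\mu$, there is a word $\sigma_1\cdots\sigma_m$ in $\supp(\mu)$ that carries $j$ to $k$, with $m\le d$. Taking suitable powers of each $\sigma_\ell$ (which lie in the finite group generated by the support) also gives access to the inverses, and hence to every permutation in the generated subgroup using boundedly many steps. The plan is to apply the word step by step, tracking the distinguished ball of color $i$ as it moves from urn $j$ to urn $k$. At every other urn, we select a ball that will end up back where it started, so that the combined move is a pure rearrangement of boundedly many balls. This works as follows. In a step, each urn must send out one ball. We send out the tracked ball, and at every other urn we send out any ball, recording its color. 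Then, with more steps along a word realizing the inverse permutation on the urns not involved in the tracked path, we return those balls. Urns with $n\ge 1$ balls always contain some ball, so every step is legal. Total variation in ball positions is bounded, but the color composition of the returned balls needs care. The cleanest device is to note that the net effect of any closed sequence of moves is a permutation of a bounded set of balls among urns. So we only need to arrange that the uncontrolled balls trace closed loops and the one tracked ball goes $j\to k$. Column sums force a compensating ball from $k$ to $j$, and we choose it to be a ball of color $i'$, which exists since $x_{i',k}\ge1$.

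\textbf{Main obstacle.} The delicate point is the bookkeeping in the last step. A single step moves a ball out of every urn simultaneously, so we must ensure that the auxiliary balls really return and that the intermediate configurations stay in $\Omega_n$. The latter is automatic, because every forward move lands in $\Omega_n$. For the former, I would try to work in the labeled chain. There the moves generate, within $O(1)$ steps, any transposition of two specified balls in different urns: repeating each auxiliary ball's route along the group word and its inverse (the inverse obtained via the order of the group element) returns it, while the tracked pair is exchanged. Projecting to colors then gives the desired swap with $L$ depending only on $d$ and $\mu$.
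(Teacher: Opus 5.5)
Your reduction step is the same as the paper's: exchange a ball of color $i$ in urn $j$ with a ball of color $i'$ in urn $k$, which lowers $\onenorm{x-y}$ by at least $2$, and iterate. You differ genuinely in how you realize that exchange in $O(1)$ updates. The paper takes two balls per urn and cites Lemma~3.1 of~\cite{GHS} for irreducibility of the labeled chain on these $2d$ balls. It then bounds the number of moves by that chain's diameter, $(2d)!/2^d-1$. You instead build the exchange by hand, from a word $g$ in $\supp(\mu)$ with $g(j)=k$ (length $m\le d-1$) followed by a word for $g^{-1}=g^{N-1}$, where $N$ is the order of $g$. Your route is self-contained, since it essentially reproves the part of the cited lemma that is needed, and it gives a smaller explicit count of $mN$ updates per exchange. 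The paper's version is shorter given the citation.

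As written, though, your key step needs one correction. No legal move realizes ``the inverse permutation on the urns not involved in the tracked path'': every update sends a ball out of every urn. And if you rerun the inverse word on the same selected balls, the tracked ball just returns to $j$. The mechanism behind ``the tracked pair is exchanged'' should be made explicit:

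Let $\beta$ be the tracked ball of color $i$ in urn $j$, and $\beta'$ a ball of color $i'$ in urn $k$. Select a set $A$ consisting of $\beta$ and one ball $a_\ell$ from each urn $\ell\ne j$, with $a_k\ne\beta'$ (possible since $n\ge2$). Apply the word for $g$, always moving exactly the balls of $A$. This is legal because $A$ remains one ball per urn, and $\beta'$ never moves. Now $\beta$ and $\beta'$ both sit in urn $k$. Replace $\beta$ by $\beta'$ in $A$ and apply the word for $g^{-1}$. Each $a_\ell$ returns from $g(\ell)$ to $\ell$, $\beta'$ moves from $k$ to $g^{-1}(k)=j$, and $\beta$ stays in $k$. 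The net displacement is exactly $E_{i,k}-E_{i,j}+E_{i',j}-E_{i',k}$. With this made explicit, your argument proves the lemma.
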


\begin{proof}
    We first show that it is possible to swap the positions of any two balls and return every other ball to its original urn, in at most $c_d = \frac{(2d)!}{2^d} - 1$ moves. Choose two balls from each urn, including the two balls we want to swap, and consider updates involving only these selected balls. Due to Lemma~3.1 of \cite{GHS}, the chain in which these balls are labeled is irreducible, and has $\frac{(2d)!}{2^d}$ states. Therefore, its diameter is at most $c_d$, which means that we can swap our two balls in at most $c_d$ moves.

    Now let $x, y \in \Omega_n$ with $x \ne y$. Choose a color $i$ and an urn $a$ such that $x_{i,a} > y_{i,a}$. Since the row sums of $x$ and $y$ agree, there is an urn $b$ such that $x_{i,b} < y_{i,b}$. Since their column sums also agree, there is a color $j$ such that $x_{j,b} > y_{j,b}$. Start with configuration $x$ and exchange a ball of color $i$ in urn $a$ with a ball of color $j$ in urn $b$, and denote the resulting configuration by $x'$. Then, on replacing $x$ with $x'$, the absolute errors $\abs{x_{i,a} - y_{i,a}}$, $\abs{x_{i,b} - y_{i,b}}$, and $\abs{x_{j,b} - y_{j,b}}$ each decrease by one, while $\abs{x_{j,a} - y_{j,a}}$ increases by at most one. Thus, we have $\onenorm{x'-y} \le \onenorm{x-y} - 2$. Repeating this procedure reaches $y$ after at most $\onenorm{x-y}/2$ exchanges. Since each exchange can be realized in at most $c_d$ updates, we obtain
    \begin{equation*}
        \dis_n(x,y) \le \frac{c_d}{2} \onenorm{x-y} \le \frac{d c_d}{2} \fnorm{x-y}.
    \end{equation*}
    Setting $c \colonequals d c_d$ finishes the proof of the lemma.
\end{proof}

We will also use the following bound on the probability of leaving a neighborhood of the stationary mean $n\Pi$ over a short time interval.

\begin{lem}
\label{stay-close-center}
    Let $\delta_0 > 0$, and suppose that
    $\fnorm{X^{(n)}(0) - n\Pi} \le \delta_0 n/2$. There exists a positive constant $c = c(\delta_0)$ such that, for every $0 < \eps < \delta_0/(4\sqrt{2d})$, we have
    \begin{equation*}
        \P \brac{\sup_{0 \le t \le \eps} \fnorm{X^{(n)}(t)-n\Pi} > \delta_0 n} \le e^{-cn}.
    \end{equation*}
\end{lem}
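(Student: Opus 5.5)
The idea is to compare the chain with a Poisson process of jumps of bounded size and apply a large deviation bound for that process. Each jump $J \in \cJ$ satisfies $\fnorm{J} \le \sqrt{2d}$. The total jump rate out of any state is at most $n$, since updates occur at rate $n$, possibly with $J = 0$. So on $[0,\eps]$ we realize the chain through a rate-$n$ Poisson clock $N(t)$ of update attempts. On this coupling, for all $t \le \eps$,
\begin{equation*}
    \fnorm{X^{(n)}(t) - X^{(n)}(0)} \le \sqrt{2d}\, N(\eps).
\end{equation*}
By the triangle inequality and the hypothesis $\fnorm{X^{(n)}(0)-n\Pi} \le \delta_0 n/2$, the event in the statement forces $\sqrt{2d}\,N(\eps) > \delta_0 n/2$. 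Equivalently, $N(\eps) > \delta_0 n/(2\sqrt{2d})$.

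Next we use the hypothesis on $\eps$. Since $\eps < \delta_0/(4\sqrt{2d})$, we have $\E N(\eps) = n\eps < \delta_0 n/(4\sqrt{2d})$. The threshold $a n$, with $a \colonequals \delta_0/(2\sqrt{2d})$, is therefore at least twice the mean. A Chernoff bound for $N(\eps) \sim \Pois(n\eps)$ gives, for $\lambda > 0$,
\begin{equation*}
    \P(N(\eps) > an) \le \exp\brac{n\eps(e^\lambda - 1) - \lambda a n}.
\end{equation*}
Bound $n\eps(e^\lambda-1)$ by $(an/2)(e^\lambda-1)$ and take $\lambda = \log 2$. This gives the bound $\exp(-an(\log 2 - 1/2))$. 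Since $\log 2 > 1/2$, we may set $c \colonequals a(\log 2 - 1/2) > 0$. This depends only on $\delta_0$ and $d$, and is uniform in $\eps$ in the allowed range, as required.

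There is no real obstacle. The only point needing care is that the domination by the Poisson clock holds pathwise, including for the supremum over $t$. This is clear because the Frobenius displacement can grow only at jump times and by at most $\sqrt{2d}$ per jump. No positivity or rate lower bound such as \eqref{rate-blue} is needed here.
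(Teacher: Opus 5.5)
Your proof is correct and follows the same route as the paper. Both arguments bound the number of updates in $[0,\eps]$ using a $\Pois(n\eps)$ tail estimate, use $\fnorm{J}\le\sqrt{2d}$ per update, and apply the triangle inequality from the initial state. You also make the tail bound explicit via Chernoff with $\lambda=\log 2$, giving $c=\frac{\delta_0}{2\sqrt{2d}}\bigl(\log 2-\tfrac12\bigr)$, a constant the paper leaves implicit.
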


\begin{proof}
    By a Poisson tail bound, there is a constant $c_d > 0$ such that with probability at least $1 - \exp(-c_d \delta_0 n)$, there are at most $\frac{\delta_0}{2\sqrt{2d}}n$ updates, each of which can change the Frobenius norm by at most $\sqrt{2d}$. Therefore, with probability at least $1 - \exp(-c_d \delta_0 n)$, the Frobenius norm changes by at most $\delta_0 n/2$, which implies the lemma.
\end{proof}

\subsection{Concentration around a deterministic trajectory}

Throughout this subsection, we take $X^{(n)}(0) = nI$. We determine the mean of $X^{(n)}(t)$ and establish a second-moment bound that is uniform in time. Our argument uses the row processes $(X_{i, \cdot}^{(n)}(t))_{t \ge 0}$, each of which is a Markov chain on the nonnegative integer row vectors whose coordinates sum to $n$. For $ \theta \in \R$, define
\begin{equation*}
    t_{n,\theta} \colonequals \frac{\log n}{2\gamma} + \theta.
\end{equation*}
We will show that, for sufficiently large $C > 0$, the fluctuations of $X^{(n)}(t_{n,-C})$ around its mean are small relative to the distance between the mean and $n \Pi$.

\begin{prop}
\label{prop:concentration}
    For each $i \in [d]$, the process $(X_{i,\cdot}^{(n)}(t))_{t \ge 0}$ has mean
    \begin{equation*}
        m_i(t) \colonequals \E\sq{X_{i,\cdot}^{(n)}(t)} = ne_i^\top \exp(t(U-I))
    \end{equation*}
    and its centered second moment satisfies
    \begin{equation*}
        \E\twonorm{X_{i,\cdot}^{(n)}(t)-m_i(t)}^2 = O(n)
    \end{equation*}
    uniformly in $t \ge 0$. Consequently, for every $\eps > 0$, if $C > 0$ is sufficiently large, then
    \begin{equation*}
        \P\brac{\fnorm{X^{(n)}(t_{n,-C})-m(t_{n,-C})} \ge \eps e^{\gamma C} \sqrt{n}/2}< \eps
    \end{equation*}
    for all sufficiently large $n$, where $m(t)$ is the matrix whose rows are $m_i(t)$.
\end{prop}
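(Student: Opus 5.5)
The plan is to exploit the fact that each row process is a system of $n$ balls of color $i$, each of which individually performs the single-ball chain with kernel $P_t = e^{t(U-I)}$ in continuous time (each ball is selected at rate one, and then moves according to $U$). For the mean, I will write $X_{i,\cdot}^{(n)}(t) = \sum_{b} e_{Y_b(t)}^\top$, where the sum runs over the $n$ balls $b$ of color $i$ and $Y_b(t)$ is the urn of ball $b$. Each $Y_b$ is marginally a Markov chain on $[d]$ with generator $U - I$ started at $i$. Hence $\E e_{Y_b(t)}^\top = e_i^\top P_t$, and summing gives $m_i(t) = n e_i^\top P_t$. Equivalently, one can apply the generator from \eqref{eqn:trans} to linear functions: each column $j$ loses mass at rate $x_{i,j}$ and sends it to $\sigma(j)$. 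This yields the linear ODE $\frac{\d}{\d t} m_i = m_i (U - I)$.

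For the second moment, I will expand $\twonorm{X_{i,\cdot}-m_i}^2 = \sum_{b,b'} \langle e_{Y_b} - e_i^\top P_t,\, e_{Y_{b'}} - e_i^\top P_t\rangle$. The $n$ diagonal terms are each at most $2$. For off-diagonal pairs, I need $\Cov$-type quantities $\P(Y_b(t)=k, Y_{b'}(t)=k) - (e_i^\top P_t)_k^2$, and these must be $O(1/n)$ uniformly in $t$. The balls are not independent: their motions are coupled through shared updates, since two balls in different urns may be selected simultaneously, and two balls in the same urn cannot both be selected. I would handle this by studying the pair process $(Y_b, Y_{b'})$. It is a Markov chain on $[d]^2$ whose generator differs from that of two independent single-ball chains only by $O(1/n)$-rate perturbations.

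More robustly, I would compute the generator on quadratics and obtain a closed linear ODE for the second-moment matrix $M(t) = \E[X_{i,\cdot}^\top X_{i,\cdot}]$. Conditioned on the configuration, the drift is linear in $x$. The quadratic part is $M(U-I) + (U-I)^\top M$ plus correction terms of order $n$, because jump sizes are bounded (with $\fnorm{J}\le\sqrt{2d}$) and rates are $O(n)$. The correction from same-urn exclusion appears through factors like $x_{i,j}(x_{i,j}-1)/n$ instead of $x_{i,j}^2/n$, again contributing $O(n)$. The centered matrix $V(t) = M(t) - m_i^\top m_i$ then satisfies $\dot V = V(U-I) + (U-I)^\top V + R(t)$ with $\|R(t)\| = O(n)$. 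The key point is to project onto the sum-zero subspace, since row sums are conserved, so $V$ annihilates $\ind$. On that subspace $U - I$ has spectral abscissa $-\gamma<0$, so the semigroup decays like $t^r e^{-\gamma t}$. Duhamel's formula then gives $\|V(t)\| \le \int_0^t C(1+s)^{2r} e^{-2\gamma s}\, O(n)\,\d s = O(n)$ uniformly in $t$. The initial condition is deterministic, so $V(0)=0$.

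I expect the main obstacle to be verifying that the correction term $R$ is genuinely $O(n)$ despite the correlated jumps. This means checking that, in $\E[(\Delta x)^\top(\Delta x)]$-type terms, the contributions quadratic in $x$ exactly cancel into the $M(U-I)+(U-I)^\top M$ form up to $O(n)$. I would verify this by an explicit computation with \eqref{eqn:trans}, treating each urn's selected ball as a multinomial choice with probabilities $x_{i,j}/n$.

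Finally, for the consequence, I will sum over the $d$ rows to get $\E\fnorm{X^{(n)}(t)-m(t)}^2 \le K n$ for a constant $K$. By Markov's inequality,
\begin{equation*}
    \P\brac{\fnorm{X^{(n)}(t_{n,-C}) - m(t_{n,-C})} \ge \eps e^{\gamma C}\sqrt{n}/2} \le \frac{4K}{\eps^2 e^{2\gamma C}},
\end{equation*}
which is less than $\eps$ once $C$ is large.
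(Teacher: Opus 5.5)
Your proposal is correct. The mean computation and the final Chebyshev step match the paper; the difference lies in how you obtain the uniform second-moment bound.

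\textbf{The paper's route.} The paper uses a scalar energy estimate. It applies the generator to $x_j^2$ and splits $y_j^2-x_j^2=2x_j(y_j-x_j)+(y_j-x_j)^2$. The squared-jump term is bounded by $n$, since each coordinate moves by at most one and the total rate is at most $n$. It then uses the coercivity bound $\inner{z}{z(U-I)}\le-\alpha\twonorm{z}^2$ on zero-sum vectors, which comes from $\inner{z}{z(U-I)}=-\frac12\sum_{j,k}U_{j,k}(z_j-z_k)^2$, irreducibility and compactness. This gives $V_i'\le-2\alpha V_i+dn$, so Gr\"onwall yields $V_i\le dn/(2\alpha)$ with no Jordan structure at all.

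\textbf{Your route.} You solve the closed matrix equation $\dot V=V(U-I)+(U-I)^\top V+R$ by Duhamel's formula,
$V(t)=\int_0^t (P_{t-s}-\Pi)^\top R(s)(P_{t-s}-\Pi)\,\d s$,
and bound it using $\fnorm{P_u-\Pi}=O((1+u)^re^{-\gamma u})$. That decay is available from \eqref{eq:spectral-two-sided}, which is proved independently of this proposition, so there is no circularity.

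\textbf{Comparison.} Your route gives an exact covariance representation that mirrors $\Sigma_t$ of the limiting diffusion. Its cost is that it needs the decay of the possibly non-normal semigroup. The paper's dissipativity argument is shorter and needs only a uniform contraction constant $\alpha$. That $\alpha$ can be smaller than $\gamma$, but this is irrelevant for an $O(n)$ bound.

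\textbf{The step you flag is not an obstacle.} The drift of the row process is exactly $x(U-I)$; same-urn exclusion never enters the marginal drift. Write $y^\top y-x^\top x=x^\top\Delta+\Delta^\top x+\Delta^\top\Delta$ with $\Delta=y-x$. The cross terms then produce $M(U-I)+(U-I)^\top M$ exactly. The remainder $R(t)=\E\sum_y q(X,y)\Delta^\top\Delta$ has entries bounded by $n$, because each coordinate changes by at most one per update. Nothing quadratic in $x$ has to cancel, and no $x_{i,j}(x_{i,j}-1)/n$ corrections arise.

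Since $\Delta\ind=0$, you also have $R=(I-\Pi)R(I-\Pi)$. This is exactly what lets you replace $P_{t-s}$ by $P_{t-s}-\Pi$ in the Duhamel integral, so you should state it explicitly rather than only projecting $V$.

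The pair-process idea you mention first would require controlling the accumulation of $O(1/n)$ perturbations uniformly in time. It is best dropped in favour of the generator computation.
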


The second moment bound in Proposition~\ref{prop:concentration} also follows from \cite{GHS}*{Lemma~4.7}. We give a direct proof in the present setting, using the following standard identity for finite-state continuous-time Markov chains.

\begin{prop}
\label{prop:ODE}
    Let $(W_t)_{t \ge 0}$ be a continuous-time Markov chain with finite state space $\cW$ and transition rate matrix $Q$. Then, for any function $f \colon \cW \to \R^k$, we have
    \begin{equation*}
        \frac{\d}{\d t} \E \sq{f(W_t)} = \E \sq{(Qf)(W_t)},
    \end{equation*}
    where $(Qf)(x) = \sum_{y \in \cW} q(x,y) f(y)$.
\end{prop}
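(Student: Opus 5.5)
The plan is to prove the identity by differentiating the Kolmogorov backward (equivalently forward) equation. Write $p_t(x,y) \colonequals \P(W_t = y \mid W_0 = x)$ and $\nu_t(y) \colonequals \P(W_t = y)$, so that
\begin{equation*}
    \E\sq{f(W_t)} = \sum_{y \in \cW} \nu_t(y) f(y).
\end{equation*}
Since $\cW$ is finite, $Q$ is a finite matrix and the transition semigroup is $p_t = e^{tQ}$. This exponential is an entire function of $t$ with derivative $\frac{\d}{\d t} e^{tQ} = e^{tQ} Q$. Here $Q$ has entries $q(x,y)$ for $x \ne y$ and diagonal $q(x,x) = -\sum_{y \ne x} q(x,y)$, so that $(Qf)(x) = \sum_y q(x,y) f(y)$ is exactly the generator applied to $f$ coordinatewise.

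The key steps, in order, are as follows.
\begin{enumerate}
    \item Write $\nu_t = \nu_0 e^{tQ}$ as a row vector. Interpreting $f$ as a $|\cW| \times k$ matrix, we get $\E\sq{f(W_t)} = \nu_0 e^{tQ} f$.
    \item Differentiate in $t$. Since all sums are finite, differentiation passes through them, giving
    \begin{equation*}
        \frac{\d}{\d t}\E\sq{f(W_t)} = \nu_0 e^{tQ} Q f = \sum_x \nu_t(x) (Qf)(x).
    \end{equation*}
    \item Recognize the right-hand side as $\E\sq{(Qf)(W_t)}$.
\end{enumerate}
The vector-valued case $k > 1$ follows by applying the scalar argument to each coordinate of $f$.

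The only point requiring care is the identification $p_t = e^{tQ}$ for the chain defined through its jump rates. This is standard for finite state spaces, because the rates are bounded and the chain is non-explosive, and the paper treats it as known. I would simply cite the Kolmogorov forward equation $\frac{\d}{\d t} p_t = p_t Q$. There is no real obstacle, since finiteness removes all the integrability and interchange issues that would arise on infinite state spaces.
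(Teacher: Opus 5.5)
Your proof is correct. Writing $\E[f(W_t)] = \nu_0 e^{tQ} f$ and differentiating the matrix exponential is the standard justification; the paper states this identity as a standard fact for finite-state chains and does not prove it. You also take the diagonal entries to be $q(x,x) = -\sum_{y \neq x} q(x,y)$, which is the right convention and matches how the paper later applies the identity in the form $\sum_{y \neq x} q(x,y)\bigl(f(y) - f(x)\bigr)$.
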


\begin{proof}[Proof of Proposition~\ref{prop:concentration}]
    Fix $i \in [d]$, and let $Q = (q(x,y))$ denote the generator of the row process $(X_{i, \cdot}^{(n)}(t))_{t \ge 0}$. Write $x = (x_1,\dots,x_d)$ for a state of this process. At an update, the expected change in coordinate $x_j$ is
    \begin{equation*}
        \sum_{s=1}^d \frac{x_s}{n} U_{s,j} - \frac{x_j}{n}.
    \end{equation*}
    Since updates occur at rate $n$, for the function $f_j^{(1)}(x) = x_j$, we obtain
    \begin{equation*}
        (Qf_j^{(1)})(x) = \sum_{s=1}^d U_{s,j} x_s - x_j = (x(U-I))_j.
    \end{equation*}
    By Proposition~\ref{prop:ODE}, we get
    \begin{equation*}
        m_i'(t) = m_i(t)(U-I), \qquad m_i(0) = ne_i^\top.
    \end{equation*}
    Hence, $m_i(t) = ne_i^\top P_t$, thereby proving the first claim.

    For every real row vector $z$, double stochasticity of $U$ gives
    \begin{equation*}
        \inner{z}{z(U-I)} = -\frac{1}{2} \sum_{j,k} U_{j,k} (z_j-z_k)^2.
    \end{equation*}
    By irreducibility, the right-hand side vanishes only when $z$ is constant. Compactness of the unit sphere in the zero-sum subspace
    therefore gives a constant $\alpha > 0$ such that
    \begin{equation*}
        \inner{z}{z(U-I)} \le -\alpha \twonorm{z}^2
    \end{equation*}
    whenever $\sum_{j=1}^d z_j = 0$.
    
    To prove the second claim, we apply the generator to the function $f_j^{(2)}(x) = x_j^2$. Since $y_j^2-x_j^2=2x_j(y_j-x_j)+(y_j-x_j)^2$, we get that
    \begin{align*}
        (Qf_j^{(2)})(x) &= \sum_{y \neq x} q(x,y)(y_j^2-x_j^2)\\
        &= 2 x_j (Qf_j^{(1)})(x) + \sum_{y \ne x} q(x,y) (y_j - x_j)^2\\
        &\le 2 x_j \brac{x(U-I)}_j + n,
    \end{align*}
    since each coordinate changes by at most one and the total jump rate of the row process is at most $n$. Proposition~\ref{prop:ODE}, together with the above inequality, implies that the function
    \begin{equation*}
        V_i(t)\colonequals\E\twonorm{X_{i,\cdot}^{(n)}(t)-m_i(t)}^2 = \E\sq{\sum_{j=1}^dX_{i,j}^{(n)}(t)^2} - \twonorm{m_i(t)}^2
    \end{equation*}
    satisfies
    \begin{align*}
        \frac{\d}{\d t} V_i(t) &= \sum_{j=1}^d \E\sq{(Qf_j^{(2)})(X_{i,\cdot}^{(n)}(t))} -\sum_{j=1}^d 2m_{i,j}(t) m_{i,j}'(t)\\
        &\le 2\E\inner{X_{i,\cdot}^{(n)}(t)}{X_{i,\cdot}^{(n)}(t)(U-I)} + dn - 2\inner{m_i(t)}{m_i(t)(U-I)}\\
        &= 2\E\inner{X_{i,\cdot}^{(n)}(t)-m_i(t)}{\brac{X_{i,\cdot}^{(n)}(t)-m_i(t)} (U-I)} + dn\\&
        \le -2\alpha V_i(t) + dn.
    \end{align*}
    Since $V_i(0) = 0$, solving the differential inequality implies that $V_i(t) \le \frac{dn}{2\alpha}$ for every $t > 0$.

    For the final claim, summing the second-moment bounds over $i \in [d]$ and applying Chebyshev's inequality gives
    \begin{equation*}
        \P\brac{\fnorm{X^{(n)}(t_{n,-C})-m(t_{n,-C})} \ge \eps e^{\gamma C}\sqrt{n}/2} \le \frac{2d^2}{\alpha\eps^2} e^{-2\gamma C}
    \end{equation*}
    for all sufficiently large $n$. The claim follows when $C$ is large enough.
\end{proof}

\subsection{Precise estimate for the mixing time}
\label{subsec:mixing-time}

We now prove Proposition~\ref{prop:mixing-time}. We first use the Jordan decomposition to determine, up to an additive constant, the time at which $P_t-\Pi$ reaches a fixed multiple of $n^{-1/2}$ in any fixed matrix norm.

\begin{lem}
\label{lem:spectral-scale}
    Let $\snorm{\cdot}$ be a fixed norm on $\R^{d \times d}$. For each fixed $a > 0$, define
    \begin{equation*}
        \tau_n(a) \colonequals \inf \set{t \ge 0}{\snorm{P_t - \Pi} \le a n^{-1/2}}.
    \end{equation*}
    Then, as $n \to \infty$, we have
    \begin{equation*}
        \tau_n(a) = \frac{\log n}{2\gamma} + \frac{r}{\gamma} \log \log n + O_{\snorm{\cdot},a}(1).
    \end{equation*}
\end{lem}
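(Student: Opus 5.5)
By equivalence of norms on $\R^{d\times d}$, it suffices to prove two-sided bounds $c_1 t^r e^{-\gamma t} \le \fnorm{P_t-\Pi} \le c_2 t^r e^{-\gamma t}$ for all $t \ge 1$. Any other norm only changes the constants $c_1, c_2$, and this changes $\tau_n(a)$ by $O(1)$.

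\textbf{Jordan decomposition.} Write $U = S\diag(1, J')S^{-1}$, where $J'$ collects all Jordan blocks for eigenvalues $\lambda \ne 1$. Since $U$ is irreducible and doubly stochastic, the eigenvalue $1$ is simple and its spectral projection is $\Pi$. Hence
\begin{equation*}
    P_t - \Pi = S\diag\brac{0, e^{t(J'-I)}}S^{-1}.
\end{equation*}
For a block $\lambda I + N$ of size $k$, with $N$ nilpotent, we have $e^{t(\lambda - 1 + N)} = e^{(\lambda-1)t}\sum_{\ell<k} t^\ell N^\ell/\ell!$. Its entries are bounded in modulus by $O(t^{k-1} e^{-(1-\Re\lambda)t})$. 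Fast blocks therefore contribute $O(t^{d} e^{-\gamma' t}) = o(e^{-\gamma t})$ for some $\gamma' > \gamma$. Slow blocks contribute at most $O(t^r e^{-\gamma t})$. This gives the upper bound.

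\textbf{Lower bound.} Consider the leading part
\begin{equation*}
    L(t) \colonequals S\diag\brac{0, \brac{e^{it\Im\lambda}N_\lambda^r/r!}_{\text{slow blocks of size } r+1}, 0}S^{-1},
\end{equation*}
so that $P_t - \Pi = e^{-\gamma t}t^r\brac{L(t) + O(1/t)}$. The matrix $L(t) = \sum_\lambda e^{it\Im\lambda} A_\lambda$ is a trigonometric polynomial with nonzero matrix coefficients $A_\lambda$, one for each distinct slow eigenvalue having a block of size $r+1$. These coefficients are linearly independent operators: they map into distinct generalized eigenspaces and are nonzero. Hence $\fnorm{L(t)} > 0$ for every $t$. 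Uniformity is the delicate step. The function $t \mapsto L(t)$ takes values in $\set{\sum_\lambda z_\lambda A_\lambda}{|z_\lambda| = 1}$, which is compact. By the direct-sum structure, no element of this set vanishes. Therefore $\inf_t \fnorm{L(t)} \ge c_0 > 0$, and for $t \ge T_0$ we get $\fnorm{P_t-\Pi} \ge (c_0/2) t^r e^{-\gamma t}$.

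\textbf{Solving for $\tau_n(a)$.} Let $t_* = \frac{\log n}{2\gamma} + \frac{r}{\gamma}\log\log n$. Set $t_\pm = t_* \pm K$. Then
\begin{equation*}
    t_\pm^r e^{-\gamma t_\pm} = n^{-1/2}(\log n)^{-r} t_\pm^r e^{\mp\gamma K} = n^{-1/2}(2\gamma)^{-r}(1+o(1))e^{\mp\gamma K}.
\end{equation*}
For $K$ large, the upper bound at all $t \ge t_+$ gives $\tau_n(a) \le t_+$. Here we use that $t^r e^{-\gamma t}$ is eventually decreasing, so the bound holds on all of $[t_+, \infty)$ and not only at $t_+$. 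For the reverse direction, the lower bound on $[T_0, t_-]$ shows $\snorm{P_t-\Pi} > a n^{-1/2}$ there, since $t^r e^{-\gamma t} \ge t_-^r e^{-\gamma t_-}$ on that interval once $T_0$ is past the maximum. On the bounded interval $[0, T_0]$, the quantity $\snorm{P_t-\Pi}$ is bounded below by a positive constant, because $P_t \ne \Pi$ for finite $t$ (the map $t \mapsto P_t$ has an invertible factor on the zero-sum space). Hence $\snorm{P_t - \Pi} > a n^{-1/2}$ on $[0, t_-]$ for large $n$, so $\tau_n(a) \ge t_-$.

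The main obstacle is the uniform positivity of $\inf_t \fnorm{L(t)}$ when several slow eigenvalues have oscillating phases. I would handle it by the compact torus argument above, using that the spectral projections onto distinct eigenvalues are independent.
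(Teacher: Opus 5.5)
Your proof is correct, and its skeleton matches the paper's. Both arguments use the Jordan decomposition of $U$, prove a two-sided bound $\snorm{P_t-\Pi}=\Theta(t^r e^{-\gamma t})$ for large $t$, and then solve for the crossing time. The differences are in execution.

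For the lower bound, the paper never meets the cancellation issue you flag as the main obstacle. It writes $P_t-\Pi=SB_tS^{-1}$ with $B_t$ block diagonal in the Jordan basis, and notes that conjugation by the fixed matrix $S$ changes any norm by at most a constant factor. It then computes $\fnorm{B_t}^2$ exactly as a sum of nonnegative, non-oscillating terms. A single slow block of size $r+1$ therefore gives $\fnorm{B_t}\ge t^r e^{-\gamma t}/r!$ for all $t$. Your torus compactness argument is valid but unnecessary. Apply $\fnorm{SMS^{-1}}\ge\fnorm{M}/(\opnorm{S}\opnorm{S^{-1}})$ to $M=\sum_\lambda z_\lambda M_\lambda$, whose blocks $M_\lambda$ have disjoint supports. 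This gives a lower bound independent of the phases $z_\lambda$ directly, which is exactly what your direct-sum observation amounts to.

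For the final step, the paper uses continuity to get $\snorm{P_{\tau_n(a)}-\Pi}=an^{-1/2}$ exactly. Taking logarithms gives $\gamma\tau_n(a)=\frac12\log n+r\log\tau_n(a)+O(1)$, which it then bootstraps. Your sandwich $t_\pm=t_*\pm K$ avoids needing equality at the crossing time. The cost is the monotonicity of $t^r e^{-\gamma t}$ beyond $r/\gamma$ and a separate compact-interval bound near $t=0$. The paper needs the same fact, $P_t\ne\Pi$ for all finite $t$, to ensure $\tau_n(a)\to\infty$.

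One minor point: for $\tau_n(a)\le t_+$ you only need the upper bound at the single time $t_+$, since $\tau_n(a)$ is an infimum. Eventual monotonicity is not needed in that direction.
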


\begin{proof}
    By the Perron--Frobenius theorem, $1$ is a simple eigenvalue of $U$ and $\gamma > 0$. Further, by the Jordan decomposition of $U$, we may choose an invertible complex matrix $S$ such that
    \begin{equation*}
        S^{-1} U S = \diag(1, J_1, \dots, J_q),
    \end{equation*}
    where $J_j = \lambda_j I + N_j$ is a Jordan block of size $m_j$, with $\lambda_j \neq 1$, for $j \in [q]$. Then we have
    \begin{equation}
    \label{eqn:exp}
        P_t - \Pi = S \diag\bigl(1, e^{t(J_1-I)}, \dots, e^{t(J_q-I)}\bigr) S^{-1} - \Pi.
    \end{equation}
    Since $U$ is doubly stochastic, every generalized eigenvector corresponding to an eigenvalue other than $1$ has coordinate sum zero. Thus, choosing the first column of $S$ to be $\ind$, we have
    \begin{equation*}
        \Pi S = S \diag(1, 0, \dots, 0).
    \end{equation*}
    Multiplying both sides by $S^{-1}$ on the right, we obtain $\Pi = S \diag(1, 0, \dots, 0) S^{-1}$. Using this in \eqref{eqn:exp}, we obtain $P_t - \Pi = S B_t S^{-1}$, where $B_t \colonequals \diag\bigl(0, e^{t(J_1-I)}, \dots, e^{t(J_q-I)}\bigr)$.
    
    Equivalence of norms on $\R^{d \times d}$ and the fact that $S$ is fixed imply
    \begin{equation}
    \label{eqn:equiv}
        \snorm{P_t - \Pi} = \Theta_{\snorm{\cdot}}(\fnorm{B_t}).
    \end{equation}
    So, we obtain precise estimates for $\fnorm{B_t}$ instead. For each $j$, $N_j$ is an $m_j \times m_j$ matrix with ones on the diagonal above the main diagonal and zeros elsewhere. Therefore, we have
    \begin{equation*}
        e^{t(J_j-I)} = e^{t(\lambda_j - 1)I} e^{tN_j} = e^{(\lambda_j-1)t} e^{tN_j} = e^{(\lambda_j-1)t} \sum_{k=0}^{m_j-1}\frac{t^kN_j^k}{k!}.
    \end{equation*}
    For $k \in \{0, \dots, m_j - 1\}$, the matrices $N_j^k$ have disjoint supports and $\fnorm{N_j^k}^2 = m_j-k$. Consequently, we have
    \begin{equation*}
        \fnorm{B_t}^2 = \sum_{j=1}^{q} e^{-2(1-\Re(\lambda_j))t} \sum_{k=0}^{m_j-1} \frac{m_j-k}{(k!)^2}t^{2k}.
    \end{equation*}
    For $t$ large, the dominant terms are those with $\Re(\lambda_j) = 1-\gamma$ and $m_j = r+1$. Combining this observation with \eqref{eqn:equiv}, it follows that there exist constants $c_1, c_2 > 0$ such that
    \begin{equation}
    \label{eq:spectral-two-sided}
        c_1 t^re^{-\gamma t} \le \snorm{P_t - \Pi} \le c_2 t^re^{-\gamma t}
    \end{equation}
    for all sufficiently large $t$. This implies $\snorm{P_t - \Pi} \to 0$ as $t \to \infty$, so $\tau_n(a) < \infty$. Note that $P_t - \Pi$ is nonzero for every finite $t$ since it has a nonzero eigenvalue $e^{t(\lambda - 1)}$ for each $\lambda \in \Lambda \setminus \{1\}$. Continuity of $\snorm{P_t - \Pi}$ therefore implies that $\tau_n(a) \to \infty$ as $n \to \infty$. Moreover, for all sufficiently large $n$, we have
    \begin{equation*}
        \snorm{P_{\tau_n(a)} - \Pi} = a n^{-1/2}.
    \end{equation*}
    Taking logarithms in \eqref{eq:spectral-two-sided} at $t = \tau_n(a)$ gives
    \begin{equation}
    \label{eqn:tau}
        \gamma \tau_n(a) = \frac{1}{2} \log n + r \log \tau_n(a) + O(1).
    \end{equation}
    Dividing by $\tau_n(a)$ first yields $\tau_n(a) \sim (\log n)/(2\gamma)$, which implies
    \begin{equation*}
        \log \tau_n(a) = \log \log n - \log(2\gamma) + o(1).
    \end{equation*}
    Substituting this into \eqref{eqn:tau} proves the assertion.
\end{proof}

\begin{proof}[Proof of Proposition~\ref{prop:mixing-time}]
    Let $\nu$ be the uniform distribution on $[d]$, and write
    \begin{equation*}
        d_{\mathrm{single}}(t) \colonequals \max_{i\in[d]} \tv \bigl(P_t(i,\cdot), \nu\bigr).
    \end{equation*}
    For $M \in \R^{d\times d}$, define the norm
    \begin{equation*}
        \rnorm{M} \colonequals \frac{1}{2} \max_{i \in [d]} \sum_{j=1}^d \abs{M_{ij}}.
    \end{equation*}
    Then $d_{\mathrm{single}}(t) = \rnorm{P_t - \Pi}$. For $n \in \N$, set
    \begin{equation*}
        s_n \colonequals \inf \set{t \ge 0}{d_{\mathrm{single}}(t) \le \sqrt{\frac{d}{n}}}.
    \end{equation*}
    Applying Lemma~\ref{lem:spectral-scale} with the row norm and $a = \sqrt{d}$ gives
    \begin{equation}
    \label{eq:common-spectral-scale}
        s_n = \frac{\log n}{2\gamma} + \frac{r}{\gamma} \log \log n + O(1).
    \end{equation}
    It therefore suffices to prove $\tmix(\eps) = s_n + O(1)$.
    
    We will now use the center estimates from~\cite{GHS}. For $R > 0$, define the center
    \begin{equation*}
        \cC_n(R) \colonequals \set{x \in \Omega_n}{x_{ij} \ge \frac{n}{d} - R \sqrt{\frac{n}{d}} \text{ for every } i, j \in [d]}.
    \end{equation*}
    In applying the estimates from \cite{GHS}, we take $m = d$ and use $n/d$ in place of their parameter $n$. Since our updates occur at rate $n$, their rate-one time parameters are divided by $n$. The balanced formulation discussed in \cite{GHS}*{Section~1.2} removes any divisibility restriction on $n$. Fix $\eps \in (0,1)$ and choose $\delta \in (0, \min \{\eps, 1 - \eps\}/2)$. Lemma~4.8 of \cite{GHS} gives constants $R, \alpha > 0$ and states $x_n \in \Omega_n$ such that, for all sufficiently large $n$, we have $\pi_n\brac{\cC_n(R)} \ge 1-\delta$, and
    \begin{align*}
        \max_{x \in \Omega_n} \P_x\!\brac{X^{(n)}(s_n+\alpha) \notin \cC_n(R)} &\le \delta, \\
        \P_{x_n}\!\brac{X^{(n)}(s_n-\alpha) \in \cC_n(R)} &\le \delta.
    \end{align*}
    Also, \cite{GHS}*{Section~10.1, equation~(86)} gives a constant $T = T(d, \mu, R, \delta) < \infty$ such that
    \begin{equation*}
        \sup_{y \in \cC_n(R)} \tv \brac{H_T^{(n)}(y, \cdot), \pi_n} \le \delta
    \end{equation*}
    for all sufficiently large $n$.
    
    For the upper bound, the Markov property and
    convexity of total variation give
    \begin{equation*}
        d_n(s_n + \alpha + T) \le \max_{x \in \Omega_n} \P_x\!\brac{X^{(n)}(s_n + \alpha) \notin \cC_n(R)} + \sup_{y \in \cC_n(R)} \tv \bigl(H_T^{(n)}(y, \cdot), \pi_n\bigr) \le 2\delta < \eps.
    \end{equation*}
    For the lower bound, testing total variation against the same set $\cC_n(R)$ gives
    \begin{equation*}
        d_n(s_n - \alpha) \ge \pi_n \bigl(\cC_n(R)\bigr) - \P_{x_n}\!\brac{X^{(n)}(s_n - \alpha) \in \cC_n(R)} \ge 1 - 2\delta > \eps.
    \end{equation*}
    Since $d_n$ is nonincreasing, we conclude that
    \begin{equation*}
        s_n - \alpha \le \tmix(\eps) \le s_n + \alpha + T
    \end{equation*}
    for all sufficiently large $n$. Combining this with
    \eqref{eq:common-spectral-scale} proves the
    mixing-time estimate. Since $s_n\to\infty$ and the
    estimate holds for every fixed $\eps\in(0,1)$,
    the asserted cutoff follows.
\end{proof}

\section{Convergence to a diffusion}
\label{sec:diffusion}

In this section, we establish a diffusion approximation for the rescaled fluctuations around the stationary mean $n \Pi$. Consider the space
\begin{equation*}
    \cH \colonequals \set{z \in \R^{d \times d}}{z \ind = 0,\ \ind^\top z = 0},
\end{equation*}
equipped with the Frobenius inner product $\finner{z}{w} \colonequals \Tr(z^\top w)$. Define the centered and rescaled process $\bY^{(n)}$ by
\begin{equation*}
    Y^{(n)}(t) \colonequals \frac{X^{(n)}(t)-n\Pi}{\sqrt{n}},
\end{equation*}
for $t \ge 0$, whose state space is given by
\begin{equation*}
    \cH_n \colonequals \set{\frac{x-n\Pi}{\sqrt{n}}}{x\in\Omega_n} \subset \cH.
\end{equation*}

To describe the limiting diffusion, set $L \colonequals 2I - U - U^\top$. Double stochasticity of $U$ gives
\begin{equation*}
    v^\top L v = \sum_{j=1}^d \sum_{k = 1}^d U_{j,k} (v_j - v_k)^2
\end{equation*}
for all $v \in \R^d$. Hence, $L$ is positive semidefinite, and we write $L^{1/2}$ for its unique positive semidefinite square root. Let $W$ be a $d \times d$ matrix of independent standard Brownian motions, and consider the $\cH$-valued Ornstein--Uhlenbeck process
\begin{equation}
\label{eq:sde}
    \d D(t) = D(t) (U - I) \,\d t+\frac1{\sqrt{d}} (I - \Pi) \,\d W(t) L^{1/2}.
\end{equation}
The coefficients are globally Lipschitz and preserve $\cH$, so \eqref{eq:sde} has a unique global solution for each initial state in $\cH$.

We establish convergence to this diffusion by comparing infinitesimal moments. Let $q_n^Y(y, z)$ denote the transition rate of $Y^{(n)}$ from $y$ to $z$. For $y \in \cH_n$ and $F, G \in \cH$, define
\begin{align*}
    b_n(y) &\colonequals \sum_{\substack{z \in \cH_n \\ z \neq y}} q_n^Y(y,z) (z-y),\\
    a_n(y; F,G) &\colonequals \sum_{\substack{z \in \cH_n \\ z \neq y}} q_n^Y(y,z) \finner{z-y}{F} \finner{z-y}{G}.
\end{align*}
Here, $b_n$ is the first infinitesimal moment, while the values of $a_n$ on pairs of vectors in an orthonormal basis of $\cH$ form the second infinitesimal moment matrix.

We apply \cite{Dur}*{Theorem~8.7.1} after identifying $\cH$ isometrically with $\R^{(d-1)^2}$. The infinitesimal moments in that theorem are defined by summing only over jumps of size at most one. Since every jump of $\bY^{(n)}$ has Frobenius norm at most $\sqrt{2d/n}$, these definitions agree with $b_n$ and $a_n$ above for all sufficiently large $n$. Since the coefficients of \eqref{eq:sde} are globally Lipschitz, it suffices to verify that
\begin{equation*}
    b_n(y_n) \to y(U-I) \quad \text{ and } \quad a_n(y_n;F,G) \to \frac{1}{d} \Tr(F^\top GL)
\end{equation*}
whenever $y_n \in \cH_n$ and $y_n \to y \in \cH$, for every $F, G \in \cH$, together with
\begin{equation*}
    \lim_{n \to \infty} \sup_{\substack{y \in \cH_n \\ \fnorm{y} \le R}} \sum_{\substack{z \in \cH_n \\ \fnorm{z-y} > \eps}} q_n^Y(y,z) = 0
\end{equation*}
for every $R, \eps > 0$. The limiting second-moment form is the covariance per unit time of the Brownian term in \eqref{eq:sde}, as we verify in the proof below.

\begin{prop}
\label{prop:diffusion-convergence}
    Suppose that the deterministic initial states $Y^{(n)}(0)\in\cH_n$ converge to $y_0 \in \cH$. Then, for every $T > 0$, we have
    \begin{equation*}
        \brac{Y^{(n)}(t)}_{t\in[0,T]} \Rightarrow (D(t))_{t\in[0,T]},
    \end{equation*}
    in $D([0,T], \cH)$ equipped with the Skorokhod $J_1$ topology, where $D(t)$ solves \eqref{eq:sde} with $D(0) = y_0$. Hence, $D(t) \sim \cN_{\cH} \brac{y_0 P_t, \Sigma_t}$, where $\Sigma_t \colon \cH \to \cH$ is the covariance operator defined by
    \begin{equation*}
        \Sigma_t F = \frac{1}{d} F (I - P_t^\top P_t).
    \end{equation*}
    Moreover, $\opnorm{\Sigma_t-d^{-1}I_{\cH}} = O(t^{2r} e^{-2\gamma t})$ as $t \to \infty$, where $I_\cH$ denotes the identity operator on $\cH$ and the operator norm on $\cH$ is induced by the Frobenius norm.
\end{prop}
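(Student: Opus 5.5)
The plan is to verify the three hypotheses of \cite{Dur}*{Theorem~8.7.1} listed just before the statement, and then to identify the law of the limiting Ornstein--Uhlenbeck process by solving \eqref{eq:sde} explicitly.

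\textbf{Jump condition.} This one is immediate. Every jump of $\bY^{(n)}$ has Frobenius norm at most $\sqrt{2d/n}$, so for fixed $\eps>0$ and large $n$ there are no jumps of size exceeding $\eps$, and the sum vanishes identically.

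\textbf{Drift.} Write $x=n\Pi+\sqrt{n}\,y$. The computation in the proof of Proposition~\ref{prop:concentration}, applied row by row, shows that the drift of $X^{(n)}$ at $x$ is $x(U-I)$. Rescaling gives
\begin{equation*}
    b_n(y)=\frac{1}{\sqrt n}\bigl(n\Pi+\sqrt n\, y\bigr)(U-I)=y(U-I),
\end{equation*}
since $\Pi(U-I)=0$ by double stochasticity. Hence $b_n(y_n)=y_n(U-I)\to y(U-I)$ exactly.

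\textbf{Second moment.} The quantity $a_n(y;F,G)$ equals $n^{-1}\sum_J q_n(x,x+J)\finner{J}{F}\finner{J}{G}$.
\begin{itemize}
    \item By \eqref{eqn:trans}, $q_n(x,x+J)/n$ is a polynomial in the entries $x_{ij}/n=1/d+y_{ij}/\sqrt n$.
    \item As $y_n\to y$, these entries converge to $1/d$, so $a_n(y_n;F,G)$ converges to the value at $x=n\Pi$.
    \item At $x=n\Pi$, the colors $i_1,\dots,i_d$ are i.i.d.\ uniform and $\sigma\sim\mu$ is independent. The limit is therefore $\E\bigl[\finner{J}{F}\finner{J}{G}\bigr]$ with $J=\sum_j e_{i_j}(e_{\sigma(j)}-e_j)^\top$.
\end{itemize}
To evaluate this, expand $\finner{J}{F}=\sum_j (F_{i_j,\sigma(j)}-F_{i_j,j})$. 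For $j\ne k$ the colors $i_j,i_k$ are independent and uniform, and $\sum_i F_{i,\ell}=0$ because $F\in\cH$. Hence all cross terms vanish. The diagonal terms give
\begin{equation*}
    \frac1d\sum_{j}\sum_{\sigma}\mu(\sigma)\sum_i (F_{i,\sigma(j)}-F_{i,j})(G_{i,\sigma(j)}-G_{i,j})=\frac1d\sum_{j,k}U_{j,k}\,\bigl\langle F e_k-Fe_j,\;Ge_k-Ge_j\bigr\rangle.
\end{equation*}
Expanding with double stochasticity, this equals $\frac1d\Tr(F^\top G(2I-U-U^\top))=\frac1d\Tr(F^\top GL)$.

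\textbf{Matching the diffusion.} We check that this is the diffusion form of \eqref{eq:sde}. The noise term is $\d M=d^{-1/2}(I-\Pi)\,\d W\,L^{1/2}$, so $\finner{\d M}{F}=d^{-1/2}\finner{\d W}{(I-\Pi)FL^{1/2}}$. Using $(I-\Pi)F=F$ for $F\in\cH$, the covariation rate is $d^{-1}\Tr(L^{1/2}F^\top GL^{1/2})=d^{-1}\Tr(F^\top GL)$. With all three conditions verified, Theorem~8.7.1 of \cite{Dur}, together with uniqueness for the Lipschitz SDE, yields the claimed weak convergence.

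\textbf{Gaussian law.} The SDE is linear, so
\begin{equation*}
    D(t)=y_0P_t+\int_0^t \d M(s)\,P_{t-s}
\end{equation*}
is Gaussian with mean $y_0P_t$. Its covariance form is
\begin{equation*}
    \frac1d\int_0^t\Tr\bigl(F^\top G\,P_{s}^{\top}LP_s\bigr)\,\d s
\end{equation*}
after the substitution $s\mapsto t-s$ and the identity $\finner{wP}{F}=\finner{w}{FP^\top}$. Since $\frac{\d}{\d s}P_s^{\top}P_s=-P_s^{\top}LP_s$, the integral equals $I-P_t^\top P_t$. Hence $\Sigma_tF=d^{-1}F(I-P_t^\top P_t)$; the ordering of the factors just needs to be tracked carefully here.

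\textbf{Operator norm bound.} Since $F\Pi=0$ for $F\in\cH$, we may write $P_t^\top P_t=(P_t-\Pi)^\top(P_t-\Pi)+\Pi$. It follows that $\Sigma_t-d^{-1}I_\cH=-d^{-1}F(P_t-\Pi)^\top(P_t-\Pi)$. Its operator norm is at most $d^{-1}\fnorm{P_t-\Pi}^2$, which is $O(t^{2r}e^{-2\gamma t})$ by \eqref{eq:spectral-two-sided}.

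I expect the main obstacle to be the second-moment computation, specifically the bookkeeping showing that the cross terms cancel on $\cH$ and that the result is exactly $\Tr(F^\top GL)/d$.
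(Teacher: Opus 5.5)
Your proposal is correct and follows the paper's proof essentially step for step. It uses the same three hypotheses of Durrett's Theorem~8.7.1, the exact drift identity $b_n(y)=y(U-I)$, the explicit solution of \eqref{eq:sde} with It\^o's isometry and $\frac{\d}{\d s}(P_s^\top P_s)=-P_s^\top L P_s$, and the decomposition $P_t^\top P_t=\Pi+(P_t-\Pi)^\top(P_t-\Pi)$ for the operator-norm bound. The only difference is cosmetic: you kill the second-moment cross terms directly via the zero column sums of $F$ and pass to the limit by continuity of the polynomial rates, whereas the paper writes $J=\sum_j f_j h_j^\top$ with centered $f_j$ and compares the actual sampling law with the uniform one; this is the same computation.
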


\begin{proof}
    Suppose that, at an update, the selected colors are $c = (c_1, \dots, c_d) \in [d]^d$ and the sampled permutation is $\sigma$. The corresponding jump $J \in \cJ$ is
    \begin{equation*}
        J \colonequals \sum_{j=1}^d e_{c_j}(e_{\sigma(j)} - e_j)^\top.
    \end{equation*}
    Then $J\in\cH$, and the jump bound from Section~\ref{subsec:basic-prop} gives $\fnorm{J}^2 \le 2d$.
    At the state $x = n\Pi + y\sqrt{n}$, the colors selected from different urns are independent and
    \begin{equation*}
        \P_x\!\brac{c_j = i} = \frac{x_{ij}}{n} = \frac{1}{d} + \frac{y_{ij}}{\sqrt{n}}
    \end{equation*}
    for all $i, j \in [d]$. Moreover, we have
    \begin{equation*}
        \P_x\!\brac{\sigma(j) = k} = U_{j,k}
    \end{equation*}
    for all $j, k \in [d]$. Consequently, we obtain
    \begin{equation*}
        \E_x\!\sq{J_{ik}} = \sum_{j=1}^d \frac{x_{ij}}{n} U_{j,k} - \frac{x_{ik}}{n} = \sum_{j=1}^d \frac{x_{ij}}n \bigl(U_{j,k} - \ind_{\{j=k\}}\bigr),
    \end{equation*}
    which implies
    \begin{equation*}
        \E_x\!\sq{J} = \frac{x}{n}(U-I) = \frac{y}{\sqrt{n}} (U-I).
    \end{equation*}
    The second equality above follows from the fact that $\Pi(U-I) = 0$. Since $Y^{(n)}$ jumps by $J/\sqrt{n}$ at rate $n$, its first infinitesimal moment is given by
    \begin{equation*}
        b_n(y) = n \E_x\!\sq{J/\sqrt{n}} = y(U - I).
    \end{equation*}
    This precisely agrees with the drift coefficient in \eqref{eq:sde}.
    
    For $F, G \in \cH$, the second infinitesimal moment of $Y^{(n)}$ is given by
    \begin{equation*}
        a_n(y; F,G) = n \E_x\!\sq{\finner{\frac{J}{\sqrt{n}}}{F} \finner{\frac{J}{\sqrt{n}}}{G}} = \E_x\!\sq{\finner{J}{F} \finner{J}{G}}.
    \end{equation*}
    We shall first compute this expectation under the law in which $\sigma \sim \mu$ and the colors $c_1,\dots,c_d$ are i.i.d. uniform on $[d]$, independently of $\sigma$. Denote expectation under this law by $\E_0$. Writing $h_j \colonequals e_{\sigma(j)} - e_j$ and $f_j \colonequals e_{c_j} - d^{-1} \ind$ for $j \in [d]$, we have
    \begin{equation*}
        J = \sum_{j=1}^d e_{c_j} h_j^\top = \sum_{j=1}^d f_j h_j^\top
    \end{equation*}
    since $\sum_{j=1}^d h_j = 0$. Therefore, we have
    \begin{equation*}
        \finner{J}{F} = \Tr(J F^\top) = \Tr\brac{\sum_{j=1}^d f_j h_j^\top F^\top} = \sum_{j=1}^d h_j^\top F^\top f_j.
    \end{equation*}
    Similarly, we have $\finner{J}{G} = \sum_{j=1}^d h_j^\top G^\top f_j = \sum_{j=1}^d f_j^\top G h_j$. Thus, we obtain
    \begin{equation}
    \label{eqn:cov}
        \E_0\!\sq{\finner{J}{F} \finner{J}{G}} = \E_0 \sum_{j=1}^d \sum_{k=1}^d h_j^\top F^\top f_j f_k^\top G h_k.
    \end{equation}
    Conditional on $\sigma$, the vectors $f_j$ are independent and centered, with covariance matrix $d^{-1} (I - \Pi)$. Further, using $\P\brac{\sigma(j)=k} = U_{j,k}$, we obtain
    \begin{equation}
    \label{eqn:hcov}
        \E_0 \sum_{j=1}^d h_j h_j^\top = 2I - \E_0 \sum_{j=1}^d \brac{e_{\sigma(j)}e_j^\top+e_je_{\sigma(j)}^\top} = L.
    \end{equation}
    Using these in \eqref{eqn:cov}, we obtain
    \begin{equation*}
        \E_0\!\sq{\finner{J}{F} \finner{J}{G}} = \E_0 \sum_{j=1}^d h_j^\top F^\top f_j f_j^\top G h_j = \frac{1}{d} \E_0 \sum_{j=1}^d h_j^\top F^\top (I - \Pi) G h_j.
    \end{equation*}
    Since $(I - \Pi)G = G$, we obtain
    \begin{equation*}
        \E_0\!\sq{\finner{J}{F} \finner{J}{G}} = \frac{1}{d} \E_0\!\sq{\Tr\brac{\sum_{j=1}^d F^\top G h_j h_j^\top}} = \frac{1}{d} \Tr\brac{F^\top G L},
    \end{equation*}
    where the last equality uses linearity of expectation and \eqref{eqn:hcov}. Since $(I-\Pi)F = F$ and $(I - \Pi) G = G$, the covariance bilinear form of the diffusion term in \eqref{eq:sde} is
    \begin{equation*}
        \frac{1}{d} \finner{(I - \Pi) F L^{1/2}}{(I - \Pi) G L^{1/2}} = \frac{1}{d} \Tr(F^\top G L).
    \end{equation*}
    So the second moment under the uniform sampling law agrees with that of the limiting diffusion. It remains to compare the actual sampling law with the uniform one. At the state $x$, the probability of choosing
    a given pair $(\sigma, c) \in S_d \times [d]^d$ is
    \begin{equation*}
        \mu(\sigma)\prod_{j=1}^d
        \brac{\frac1d+\frac{y_{c_j,j}}{\sqrt{n}}},
    \end{equation*}
    which converges to $\mu(\sigma)d^{-d}$ uniformly for bounded $y$. Since there are only finitely many pairs $(\sigma,c)$, and $J$ depends only on this pair, it follows that
    \begin{equation*}
        \lim_{n \to \infty} a_n(y_n;F,G) = \E_0\!\sq{\finner{J}{F} \finner{J}{G}} = \frac{1}{d} \Tr(F^\top GL)
    \end{equation*}
    for any $y_n \in \cH_n$ and $\lim\limits_{n \to \infty} y_n = y$. This shows convergence of the second infinitesimal moments.
    
    Finally, every jump of $\bY^{(n)}$ is of the form $J/\sqrt{n}$ with $J \in \cJ$, so its Frobenius norm is at most $\sqrt{2d/n}$. Therefore, for every $\eps > 0$, the rate of jumps larger than $\eps$ is identically zero for all sufficiently large $n$. All the hypotheses of \cite{Dur}*{Theorem~8.7.1} are therefore satisfied, and hence
    \begin{equation*}
        (Y^{(n)}(t))_{t \in [0, T]} \Rightarrow (D(t))_{t \in [0, T]}.
    \end{equation*}
    
    It remains to determine the law of $D(t)$. For $D(0) = y_0$, the SDE \eqref{eq:sde} has the unique solution
    \begin{equation*}
        D(t) = y_0 P_t + \frac{1}{\sqrt{d}} \int_0^t (I - \Pi) \,\d W(s) L^{1/2} P_{t-s}, \qquad t \ge 0.
    \end{equation*}
    The stochastic integral is a centered Gaussian random variable on $\cH$. Hence, $D(t)$ is Gaussian with mean $y_0 P_t$. Define $Z(t) \colonequals D(t) - y_0 P_t$. Let $F, G \in \cH$. We have
    \begin{equation*}
        \finner{Z(t)}{F} = \frac{1}{\sqrt{d}} \int_0^t \finner{(I-\Pi)\,\d W(s)L^{1/2} P_{t-s}}{F} = \frac{1}{\sqrt{d}} \sum_{i=1}^d \sum_{j=1}^d \int_0^t (F P_{t-s}^\top L^{1/2})_{ij}\,\d W_{ij}(s).
    \end{equation*}
    Since the entries of $W$ are independent standard Brownian motions, It\^o's isometry gives
    \begin{align*}
        \Cov\brac{\finner{D(t)}{F}, \finner{D(t)}{G}} &= \E[\finner{Z(t)}{F} \finner{Z(t)}{G}]\\
        &= \frac{1}{d} \sum_{i,j,k,\ell \in [d]}
        \ind_{\{i=k,\ j=\ell\}}
        \int_0^t
        (F P_{t-s}^\top L^{1/2})_{ij}
        (G P_{t-s}^\top L^{1/2})_{k\ell}\,\d s\\
        &= \frac1d\int_0^t
        \finner{F P_{t-s}^\top L^{1/2}}{GP_{t-s}^\top L^{1/2}}\,\d s,\\
        &= \frac{1}{d} \int_0^t \Tr(L^{1/2} P_{t-s} F^\top G P_{t-s}^\top L^{1/2}) \d s\\
        &= \frac1d\int_0^t
        \Tr(F^\top G P_{t-s}^\top L P_{t-s}) \d s,\\
        &= \frac1d\int_0^t
        \Tr(F^\top G P_s^\top L P_s) \d s,
    \end{align*}
    Now we have
    \begin{equation*}
        \int_0^t P_s^\top L P_s\,\d s = - \int_0^t \frac{\d}{\d s}(P_s^\top P_s) \,\d s = I - P_t^\top P_t.
    \end{equation*}
    Thus, if $\Sigma_t \colon \cH \to \cH$ is defined by
    \begin{equation*}
        \Sigma_t F \colonequals \frac{1}{d} F (I - P_t^\top P_t),
    \end{equation*}
    then
    \begin{equation*}
        \Cov \bigl(\finner{D(t)}{F}, \finner{D(t)}{G}\bigr) = \finner{F}{\Sigma_t G}.
    \end{equation*}
    Consequently, we obtain $\Law(D(t)) = \cN_{\cH}(y_0 e^{t(U-I)}, \Sigma_t)$.
    
    For the final assertion, double stochasticity of $U$ and hence $P_t$ gives
    \begin{equation*}
        P_t^\top P_t = \Pi + (P_t-\Pi)^\top (P_t-\Pi).
    \end{equation*}
    Let $F \in \cH$. Then $F \Pi = 0$, and so the formula for $\Sigma_t$ yields
    \begin{equation*}
        (\Sigma_t-d^{-1}I_\cH) F = -\frac{1}{d} F(P_t-\Pi)^\top (P_t-\Pi).
    \end{equation*}
    Taking Frobenius norms and using submultiplicativity together with \eqref{eq:spectral-two-sided} yields the result.
\end{proof}

We now show that the stationary distribution of the centered and rescaled process $\bY^{(n)}$ converges to the stationary Gaussian law of the limiting diffusion.

\begin{lem}
\label{lem:stationary-limit}
    Let $X^{(n)}(\infty)$ have distribution $\pi_n$, and set
    \begin{equation*}
        Y^{(n)}(\infty)
        \colonequals
        \frac{X^{(n)}(\infty)-n\Pi}{\sqrt{n}}.
    \end{equation*}
    Then, as $n\to\infty$,
    \begin{equation*}
        Y^{(n)}(\infty)
        \Rightarrow
        \cN_\cH(0,d^{-1}I_\cH).
    \end{equation*}
\end{lem}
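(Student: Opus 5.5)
The stationary distribution $\pi_n$ has an explicit form, and I would use it directly. In the labeled chain the stationary law $\widetilde{\pi}_n$ is uniform on $\widetilde{\Omega}_n$, so $\pi_n$ is the law of the color-by-urn count matrix when the $dn$ balls ($n$ of each color) are placed uniformly at random into $d$ urns of size $n$. Equivalently, $\pi_n(x) = \prod_{i}(n!) \prod_j (n!) / \bigl((dn)! \prod_{i,j} x_{ij}!\bigr)$, which is a multivariate hypergeometric contingency-table law. The task is then the classical central limit theorem for random contingency tables with fixed margins.

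\textbf{Route 1: Stirling's formula.} I would prove a local limit theorem. For $x = n\Pi + \sqrt{n}\,y$ with $y \in \cH_n$ and $\fnorm{y} \le R$, Stirling's formula gives
\begin{equation*}
    \log \prod_{i,j} x_{ij}! = \sum_{i,j}\Bigl[x_{ij}\log x_{ij} - x_{ij} + \tfrac12\log(2\pi x_{ij})\Bigr] + o(1).
\end{equation*}
Expanding $x_{ij}\log x_{ij}$ around $n/d$ to second order, the linear terms vanish because $\sum_{i,j} y_{ij} = 0$. The quadratic term is $\frac{d}{2}\sum_{i,j} y_{ij}^2 = \frac{d}{2}\fnorm{y}^2$, and the cubic remainder is $O(R^3/\sqrt{n})$. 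Hence
\begin{equation*}
    \pi_n(x) = c_n \exp\bigl(-\tfrac{d}{2}\fnorm{y}^2\bigr)(1+o(1))
\end{equation*}
uniformly on $\fnorm{y}\le R$, where $c_n$ does not depend on $y$. This is exactly the density of $\cN_\cH(0, d^{-1}I_\cH)$ relative to Lebesgue measure on $\cH$. The points of $\cH_n$ form a translate of the lattice $n^{-1/2}\Lambda_0$, with $\Lambda_0 = \cH \cap \Z^{d\times d}$ of rank $(d-1)^2$ and fixed covolume. So a Riemann sum argument gives, for every bounded continuous $g$ supported in $\{\fnorm{y}\le R\}$,
\begin{equation*}
    \E\, g\bigl(Y^{(n)}(\infty)\bigr) \to \int g \, \d\cN_\cH(0,d^{-1}I_\cH),
\end{equation*}
once the normalization $c_n$ is identified.

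\textbf{Tightness and normalization.} I would obtain tightness from a second-moment bound. Each entry $X_{ij}$ under $\pi_n$ is hypergeometric: it counts the color-$i$ balls among the $n$ balls of urn $j$. Its mean is $n/d$ and its variance is at most $n/d$, so $\E\fnorm{Y^{(n)}(\infty)}^2 \le d$. Tightness together with the local limit on compacts pins $c_n$ to the Gaussian normalization, since mass is conserved, and weak convergence follows. The main obstacle is making the Stirling expansion uniform and correctly handling the lattice covolume. Tightness sidesteps computing $c_n$ explicitly.

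\textbf{Route 2: diffusion limit, as a cross-check.} Alternatively, one can use Proposition~\ref{prop:diffusion-convergence}. Tightness of $Y^{(n)}(\infty)$ gives subsequential limits $\nu$. Starting from stationarity and using continuity of the limit diffusion in its initial point, $\nu$ is invariant for \eqref{eq:sde}. That Ornstein--Uhlenbeck process has the unique invariant law $\lim_{t\to\infty} \cN_\cH(y_0 P_t, \Sigma_t) = \cN_\cH(0, d^{-1} I_\cH)$ by the last assertion of Proposition~\ref{prop:diffusion-convergence}. The delicate step here is passing from deterministic initial states to random ones, which requires uniformity of the convergence over compact sets of initial points.
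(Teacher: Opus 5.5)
Your proposal is correct, and your Route 1 proves the central limit theorem differently from the paper. Both arguments start from the same representation of $\pi_n$: the color-by-urn count matrix of a uniformly random assignment of the $dn$ balls to the $dn$ slots.

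The paper then applies Cram\'er--Wold. It treats each projection $\finner{Y^{(n)}(\infty)}{A} = n^{-1/2}\sum_r b_{r,p(r)}$ as a Hoeffding permutation statistic and invokes the Chen--Fang combinatorial CLT, which supplies both the exact variance $\frac{n}{dn-1}\fnorm{A}^2$ and a third-moment normal approximation bound.

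Your Route 1 instead uses the explicit product formula $\pi_n(x)\propto\prod_{i,j}(x_{ij}!)^{-1}$ and Stirling's formula to get a local limit theorem on compacts. Tightness, from the hypergeometric variance bound $\E\fnorm{Y^{(n)}(\infty)}^2\le d$, then fixes the unknown normalization $c_n$. The steps check out:
\begin{itemize}
\item the linear term vanishes since $\sum_{i,j}y_{ij}=0$;
\item the Taylor remainder is $O(R^3/\sqrt n)$ uniformly, because $x_{ij}\ge n/d-R\sqrt n$;
\item $\Lambda_0$ is a full-rank lattice in $\cH$, so Riemann sums over the shifted lattices $\cH_n$ converge on compacts;
\item squeezing $\P(\fnorm{Y^{(n)}(\infty)}\le R)$ between $1$ and the tightness bound forces $c_n$ to the Gaussian normalization.
\end{itemize}

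What each route buys: the paper's route is shorter and quantitative (a Berry--Esseen-type rate for each linear functional), but it relies on an external theorem. Yours is elementary and self-contained, and it gives the stronger pointwise asymptotics of $\pi_n$ at the $\sqrt n$ scale, at the cost of routine lattice bookkeeping.

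Your Route 2 also works, and the step you flag as delicate is easy. Proposition~\ref{prop:diffusion-convergence} already gives convergence along every convergent sequence of deterministic initial points in $\cH_n$. So a Skorokhod coupling of the rescaled stationary laws, together with bounded convergence, shows that any subsequential limit $\nu$ equals the law of $D(t)$ started from $\nu$, for every $t$. Letting $t\to\infty$ identifies $\nu=\cN_\cH(0,d^{-1}I_\cH)$, since $y_0P_t=y_0(P_t-\Pi)\to0$ and $\Sigma_t\to d^{-1}I_\cH$.
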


\begin{proof}
    By the Cram\'er--Wold theorem, it suffices to show that
    \begin{equation*}
        \finner{Y^{(n)}(\infty)}{A}
        \Rightarrow
        \cN(0,d^{-1}\fnorm{A}^2)
    \end{equation*}
    for every fixed $A=(a_{i,j})\in\cH$. The case $A=0$ is
    immediate, so assume that $A\neq0$.

    Label the $dn$ balls and the $dn$ urn slots by $[dn]$, with $n$ slots in each urn. Let $c(r)$ denote the color of ball $r$ and $u(s)$ the urn containing slot $s$. Assign ball $r$ to slot $p(r)$, where $p$ is a uniformly random permutation of $[dn]$. Each update in the chain $\bX^{(n)}$ permutes the contents of the selected slots and hence preserves the uniform assignment law. Its projection onto color counts is therefore stationary and, by uniqueness, has distribution $\pi_n$.

    Set $b_{r,s} \colonequals a_{c(r),u(s)}$ for $r, s \in [dn]$. Since $A \in \cH$, the array $(b_{r,s})$ has zero row and column sums, and $\finner{\Pi}{A} = 0$. Consequently, under the above realization of $X^{(n)}(\infty)$, we have
    \begin{equation*}
        \finner{Y^{(n)}(\infty)}{A}
        =\frac1{\sqrt{n}}\sum_{r=1}^{dn}b_{r,p(r)}.
    \end{equation*}
    Each entry $a_{i,j}$ occurs $n^2$ times in $(b_{r,s})$. Thus, the variance formula in \cite{CF}*{Theorem~1.1} gives
    \begin{align*}
        \sigma_n^2 \colonequals \Var\brac{\finner{Y^{(n)}(\infty)}{A}} = \frac{1}{n(dn-1)} \sum_{r,s=1}^{dn}b_{r,s}^2 = \frac{n}{dn-1} \fnorm{A}^2 \xrightarrow[n\to\infty]{} d^{-1}\fnorm{A}^2 > 0.
    \end{align*}

    We apply the normal approximation bound in the same theorem to the array $(b_{r,s}/(\sqrt{n}\,\sigma_n))$. Its permutation sum $\sum_{r=1}^{dn} (b_{r,p(r)}/(\sqrt{n}\,\sigma_n))$ has variance one, and
    \begin{equation*}
        \frac{1}{dn} \sum_{r,s \in [dn]}
        \abs{\frac{b_{r,s}}{\sqrt{n} \sigma_n}}^3 = \frac1{d \sqrt{n} \sigma_n^3} \sum_{i,j \in [d]} \abs{a_{i,j}}^3 \xrightarrow[n\to\infty]{} 0.
    \end{equation*}
    It follows that $\sigma_n^{-1}\finner{Y^{(n)}(\infty)}{A} \Rightarrow \cN(0,1)$. The limit of $\sigma_n^2$ now yields the result.
\end{proof}

\section{The cutoff profile}
\label{sec:profile}

To complete the proof of Theorem~\ref{thm:main}, we need to couple two copies of the chain started near the stationary mean and close to each other so that they coalesce quickly. For this, we use a slight variant of Proposition 3.10 of \cite{BBL}.

\begin{prop}
\label{prop:BBL}
    Set $\delta_1 = 1/(2d)$ and define
    \begin{equation*}
        B_n \colonequals \set{x \in \Omega_n}{\fnorm{x-n\Pi} \le \frac{n\delta_1}{2}}.
    \end{equation*}
    There exists $C_0 > 0$ such that, for every fixed $0 < \eps < \frac{\delta_1}{4\sqrt{2d}}$, if $n$ is sufficiently large, then
    \begin{equation*}
        \tv\brac{H_\eps^{(n)}(x,\cdot), H_\eps^{(n)}(y,\cdot)} \le \frac{C_0}{\sqrt{n\eps}}  \fnorm{x-y}
    \end{equation*}
    for all $x, y \in B_n$.
\end{prop}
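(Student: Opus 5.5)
The plan is to reduce to neighbouring configurations and then couple. By Lemma~\ref{alg-dist}, for $x, y \in B_n$ there is a path $x = z_0, z_1, \dots, z_m = y$ of adjacent configurations with $m \le c\fnorm{x-y}$. The proof of Lemma~\ref{alg-dist} produces this path by successive two-ball exchanges starting near $x$, each of which moves $\onenorm{\cdot - y}$ closer. Hence every intermediate $z_k$ lies within Frobenius distance $O(\fnorm{x-y})$ of the segment between $x$ and $y$. If $\fnorm{x-y} \ge \sqrt{n\eps}/C_0$, the bound is trivial, so we may assume $\fnorm{x-y} = O(\sqrt{n\eps})$. Then all $z_k$ lie within distance $n\delta_1/2 + o(n)$ of $n\Pi$, and so within, say, $3n\delta_1/4$. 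By the triangle inequality for total variation, it suffices to show
\begin{equation*}
    \tv\brac{H_\eps^{(n)}(z,\cdot), H_\eps^{(n)}(z',\cdot)} \le \frac{C_1}{\sqrt{n\eps}}
\end{equation*}
for adjacent $z, z'$ with $\fnorm{z-n\Pi} \le 3n\delta_1/4$. This gives $C_0 = cC_1$.

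For the one-step bound I would follow \cite{BBL}*{Proposition~3.10}. Write $z' = z + J_0$ with $J_0 \in \cJ$, realised by one update in which a specific set of at most $2d$ balls is permuted. I would run both chains using the labeled representation with common randomness, in which the same slots are selected and the same permutation is applied. Under this synchronous coupling, the two configurations differ at all times by a bounded number of "discrepant" balls, which are permuted among urns by the single-ball dynamics. The aim is to produce a coalescence event of probability $1 - O((n\eps)^{-1/2})$ by time $\eps$.

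The idea is to use the randomness of the total number of updates and of which balls are hit. A single discrepancy can be removed by one well-placed extra update, namely the reverse of $J_0$ or a move in the irreducible finite gadget from Lemma~\ref{alg-dist}. By \eqref{rate-blue}, every jump $J \in \cJ \setminus \{0\}$ occurs at rate at least $r_0 n$ as long as the chain stays at distance at most $\delta_0 n$ from $n\Pi$, where $\delta_0 = \delta_1$. Lemma~\ref{stay-close-center} guarantees this up to time $\eps$ with probability $1 - e^{-cn}$, since $\eps < \delta_1/(4\sqrt{2d})$. So the number $N_J$ of occurrences of a given jump type in $[0,\eps]$ behaves like a Poisson variable with mean of order $n\eps$, whose rates depend only mildly on the state.

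The key comparison is a shift-by-one argument. Consider the law of the first chain, decomposed according to the counts of the jump that undoes the discrepancy. Shifting that count by one maps it to the law of the second chain. Its total variation cost is of order $\tv(\Pois(\lambda), \Pois(\lambda)+1) = O(\lambda^{-1/2}) = O((n\eps)^{-1/2})$. Equivalently, one can use a reflection or "delay" coupling, in which the second chain waits for the first update of type $J_0^{-1}$ in the first chain.

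I expect the main obstacle to be making this Poisson shift rigorous when the rates are state-dependent and updates are not commutative. The rates change by $O(1/n)$ relative amounts per step, and inserting an extra update changes the order of operations. I would handle this as in \cite{BBL}, by conditioning on the skeleton (the embedded jump sequence) and the holding times. The extra update should be inserted at a uniformly chosen position among $\Theta(n\eps)$ available slots. The likelihood-ratio perturbations from the changed state are then controlled, each being of size $O(1/n)$ over $O(n\eps)$ steps, hence $O(\eps)$ in total. This contribution must be shown to be subleading, or absorbed, relative to the $(n\eps)^{-1/2}$ term.
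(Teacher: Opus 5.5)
\textbf{There is a genuine gap, in the one-step estimate.} Your reduction to adjacent pairs via path localization is fine. The paper instead evaluates rates at the projection $p_n(x)$ onto the ball of radius $n\delta_1$, which defines a modified chain on the whole lattice. The gap is the bound $\tv(H_\eps^{(n)}(z,\cdot),H_\eps^{(n)}(z',\cdot))\le C_1/\sqrt{n\eps}$ for adjacent states, which is the whole content of the proposition, and you do not establish it.

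The Poisson shift gives $O((n\eps)^{-1/2})$ only if the extra jump is spread over a window of length of order $\eps$. Throughout that window the two copies sit at states differing by $J_0$, so their remaining rates differ by $O(1)$ in total. Your own accounting of this effect gives an additive $O(\eps)$ error. For fixed $\eps$ and $n\to\infty$ this is a constant, not $o((n\eps)^{-1/2})$, so it cannot be shown to be subleading or absorbed into $C_1/\sqrt{n\eps}$. Summed along a path of length $\asymp\fnorm{x-y}$, it contributes $O(\eps\fnorm{x-y})$. That is useless exactly where the proposition is used in Lemma~\ref{lem:subseq-profile}, namely $\fnorm{x-y}$ up to $2\eps^2\sqrt{n}$. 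So the step you flag as the main obstacle is where the argument currently fails.

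\textbf{How the paper closes it: a recursion.}
\begin{enumerate}
\item It reserves a state-independent ``blue'' rate $nr_0$ of $J$-jumps in both copies. The modified chain makes this available at every state.
\item It couples the blue counts so that the copy started at $x$ makes exactly one extra $J$-jump, at cost $O((nr_0s)^{-1/2})$. Adding $J$ to that copy, rather than $-J$ to the other, also avoids needing $-J_0\in\cJ$, which can fail when $\mu$ is not inversion-invariant.
\item It couples all other jumps maximally. Unmatched jumps are not counted as failure. By \eqref{red-rates}, their expected number $R(s)$ on $[0,s]$ is at most $e^{r_1s}-1<1/4$.
\item The residual discrepancy, a sum of at most $R(s)$ jumps, is fed back into $\Delta_n(t-s)=\Delta_n(t/4)$. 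This gives $\Delta_n(t)\le C_0/\sqrt{r_0nt}+\tfrac14\Delta_n(t/4)$, which iterates to $\Delta_n(t)\le 2C_0/\sqrt{r_0nt}$ because $\tfrac14\sqrt4<1$.
\end{enumerate}
The effect is that the $O(\eps)$ mismatch probability ends up \emph{multiplied} by a quantity of order $(n\eps)^{-1/2}$ instead of added to it.

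\textbf{Alternative repair of your route, at second order.} Reserve only rate $nr_0/2$ for blue jumps, so all remaining rates stay at least $nr_0/2$. Conditional on the coupled blue jumps, the relative entropy between the two remaining jump processes is at most the expectation of $\int_0^\eps\sum_J(\Delta q_J)^2/q_J\,\d u=O(\eps/n)$. Pinsker then gives $O(\sqrt{\eps/n})$, which genuinely is subleading to $(n\eps)^{-1/2}$. Your count of $O(1/n)$ per step over $O(n\eps)$ steps misses the cancellation between the jump terms and the compensator in the log-likelihood ratio.
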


\begin{proof}
    The proof is similar to that of Proposition 3.10 of \cite{BBL}. We present the full proof for completeness. Let $p_n$ be the metric projection onto the closed Frobenius ball of radius $n\delta_1$ centered at $n\Pi$. Consider the modified chain $\widehat{\bX}^{(n)}$ on $\Z^{d\times d}\cap(n\Pi+\cH)$ with transition rates
    \begin{equation*}
        \hq_n(x,x+J)
        \colonequals q_n(p_n(x),p_n(x)+J),
        \qquad J\in\cJ\setminus\{0\},
    \end{equation*}
    and write $\hH_t^{(n)}$ for its transition kernel. Since $p_n(x)$ has all row and column sums equal to $n$ and every entry at least $n/(2d)$, the extension of the rate formula~\eqref{eqn:trans} from Section~\ref{subsec:basic-prop} applies for all sufficiently large $n$. These rates agree with those of the original chain whenever $\fnorm{x-n\Pi} \le n \delta_1$, and their sum is at most $n$. Moreover, taking $\delta_0 = \delta_1$ in \eqref{rate-blue} gives
    \begin{equation}
    \label{eqn:blue}
        \hq_n(x,x+J)\ge nr_0
    \end{equation}
    for every state $x$ and every $J \in \cJ \setminus \{0\}$.

    For nonnegative real matrices $a,b$ with all row and column sums equal to $n$, \eqref{eqn:trans} gives
    \begin{equation*}
        \sum_{J \in \cJ \setminus \{0\}} \abs{q_n(a,a+J) - q_n(b,b+J)} \le \onenorm{a-b}.
    \end{equation*}
    Since $p_n$ is $1$-Lipschitz, it follows that
    \begin{equation}
    \label{red-rates}
        \sum_{J \in \cJ \setminus \{0\}} \abs{\hq_n(x, x+J)-\hq_n(y, y+J)} \le \onenorm{p_n(x) - p_n(y)} \le d\fnorm{x-y}
    \end{equation}
    for all states $x,y$ of the modified chain. In particular, if $x-y$ is a sum of at most $k$ signed jumps from $\cJ$, the right-hand side is at most $kr_1$, where $r_1\colonequals d\sqrt{2d}$. Define
    \begin{equation*}
        \Delta_n(t) \colonequals \sup_{x\in\Z^{d\times d}\cap(n\Pi+\cH),\, J\in\cJ\setminus\{0\}} \tv\brac{\hH_t^{(n)}(x,\cdot), \hH_t^{(n)}(x+J,\cdot)}.
    \end{equation*}
    Our main claim is that there exists a constant $C_0 > 0$ such that, for $0 < t < 1/(8r_1)$,
    \begin{equation}
    \label{recursion-Delta}
        \Delta_n(t) \le \frac{C_0}{\sqrt{r_0nt}} + \frac{1}{4} \Delta_n(t/4).
    \end{equation}
    Proving \eqref{recursion-Delta} will imply the proposition. Indeed, it follows by induction that for each $K \in \Z_{\ge 0}$,
    \begin{equation*}
        \Delta_n(t) \le 4^{-(K+1)} \Delta_n(t/4^{K+1}) + \frac{C_0}{\sqrt{r_0nt}}\sum_{k=0}^{K} 2^{-k}.
    \end{equation*}
    Since $\Delta_n(u) \le 1$ for all $u \ge 0$, letting $K \to \infty$ yields $\Delta_n(t) \le 2C_0/\sqrt{r_0nt}$.

    For each $z \in B_n$, couple the original and modified chains started at $z$ so that they agree until their common first exit from the ball of radius $n\delta_1$ centered at $n\Pi$. Lemma~\ref{stay-close-center} gives
    \begin{equation*}
        \tv\brac{H_\eps^{(n)}(z,\cdot), \hH_\eps^{(n)}(z,\cdot)} \le e^{-cn}.
    \end{equation*}
    Since $\eps < 1/(8r_1)$, the triangle inequality along a path in the graph defining $\dis_n$ gives
    \begin{equation*}
        \tv\brac{H_\eps^{(n)}(x,\cdot), H_\eps^{(n)}(y,\cdot)} \le \Delta_n(\eps) \dis_n(x,y) + 2e^{-cn}.
    \end{equation*}
    for all $x, y \in B_n$. The case $x = y$ is immediate. Otherwise, $\dis_n(x,y) \ge 1$, so the exponential term can be absorbed into a constant multiple of $\dis_n(x,y)/\sqrt{n\eps}$ for all sufficiently large $n$. Lemma~\ref{alg-dist} then gives the desired bound.

    It remains to prove \eqref{recursion-Delta}. Set $s = 3t/4$ and consider two copies of the modified chain started at $x$ and $x + J$, where $J \in \cJ \setminus \{0\}$. By \eqref{eqn:blue}, we may reserve jumps by $J$ at rate $n r_0$ in each copy and call them \emph{blue}. Couple the remaining jumps so that they agree as often as possible. Call the matched jumps \emph{green} and the unmatched jumps \emph{red}. Thus, when the two copies are at states $x_1$ and $x_2$, the total red rate is
    \begin{equation*}
        \sum_{J' \in \cJ \setminus \{0\}} \abs{\hq_n(x_1, x_1+J')-\hq_n(x_2, x_2+J')}.
    \end{equation*}

    We first generate the blue jumps on $[0,s]$. Let $N,N'$ each have distribution $\Pois(nr_0s)$, and couple $N$ and $N'+1$ maximally. Let $E_s$ denote the event $\{N = N'+1\}$. On $E_s$, choose $N'$ common independent uniform jump times in $[0,s]$, together with one additional independent uniform jump time for the first copy. There exists a constant $C_0$ such that
    \begin{equation*}
        \P\brac{E_s^\complement} = \tv\brac{\Law(N), \Law(N'+1)} \le \frac{C_0}{\sqrt{nr_0s}}.
    \end{equation*}
    On the event $E_s$, couple the green and red jumps as explained above. Let $R(u)$ be the total number of red jumps up to time $u \le s$ on $E_s$, and set $R(u) \colonequals 0$ on $E_s^\complement$. On $E_s$, the blue processes differ by at most one jump, so the positions of the two copies at time $u$ differ by a sum of at most $R(u) + 1$ signed jumps. Hence, by \eqref{red-rates}, their total red rate is at most $r_1(R(u) + 1)$. Therefore, we have
    \begin{equation*}
        \dfrac{\text{d}}{\text{d}u} \E[R(u)] \le r_1 \E[R(u)+1]\ \Rightarrow\  \E\sq{R(s)} \le e^{r_1s} - 1 \le 2r_1s < \frac{1}{4}.
    \end{equation*}
    On $E_s$, the additional blue jump cancels the initial displacement by time $s$, so the two positions then differ by a sum of at most $R(s)$ signed jumps. By the semigroup property and the triangle inequality, we have
    \begin{align*}
        \tv\brac{\hH_t^{(n)}(x,\cdot), \hH_t^{(n)}(x+J,\cdot)} &\le \P\brac{E_s^\complement} + \E\sq{\ind_{E_s} \tv\brac{\hH_{t-s}^{(n)}(\hX^{(n)}(s),\cdot), \hH_{t-s}^{(n)}(\hX'^{(n)}(s),\cdot)}}\\
        &\le \frac{C_0}{\sqrt{nr_0s}} + \E\sq{R(s)} \Delta_n(t-s).
    \end{align*}
    Taking the supremum over $x$ and $J$, and using $s = 3t/4$, proves \eqref{recursion-Delta}.
\end{proof}

Under condition~1 of Theorem~\ref{thm:main}, for
\begin{equation*}
    z = (z_\lambda)_{\lambda\in\LS} \in \cT_U \colonequals \overline{\set{(e^{iu\Im(\lambda)})_{\lambda\in\LS}}{u\in\R}} \subseteq (S^1)^{\abs{\LS}},
\end{equation*}
and $\phi \in \R$, define
\begin{equation*}
    m_{\phi,z} \colonequals S \diag \brac{0, \brac{z_\lambda e^{(-\gamma + i\Im(\lambda)) \phi}}_{\lambda\in\LS}, 0} S^{-1}.
\end{equation*}
The definition of $\cT_U$ ensures that these matrices are real, and their row and column sums are zero, so $m_{\phi,z}\in\cH$. The next lemma shows that the limiting diffusion, started near a suitable deterministic state, is close in total variation to a Gaussian law with covariance $d^{-1}I_\cH$.

\begin{lem}
\label{Lem:small-tv}
    Assume that condition~1 of Theorem~\ref{thm:main} holds. For every $\theta \in \R$, there exists $K_\theta > 0$ such that, for every $\eps \in (0,1]$, the following holds for all sufficiently large $C$, uniformly over $z \in \cT_U$. If $D(0)\in\cH$ is deterministic and
    \begin{equation*}
        \fnorm{D(0)-m_{-C,z}}\le\eps e^{\gamma C},
    \end{equation*}
    then
    \begin{equation*}
        \sup_{0 \le \delta \le \eps} \tv \brac{\Law(D(C+\theta-\delta)), \cN_\cH(m_{\theta,z}, d^{-1}I_\cH)} \le K_\theta\eps.
    \end{equation*}
\end{lem}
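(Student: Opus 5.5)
The approach is to use the explicit Gaussian law of the limiting diffusion from Proposition~\ref{prop:diffusion-convergence} and split the total variation distance by the triangle inequality into a mean-shift part and a covariance part. Write $s \colonequals C+\theta-\delta$ with $0\le\delta\le\eps\le 1$, so that $s\ge C+\theta-1\to\infty$ as $C\to\infty$. By Proposition~\ref{prop:diffusion-convergence}, $\Law(D(s)) = \cN_\cH(D(0)P_s,\Sigma_s)$. I would bound
\begin{equation*}
    \tv\brac{\cN_\cH(D(0)P_s,\Sigma_s),\cN_\cH(m_{\theta,z},d^{-1}I_\cH)} \le \tv\brac{\cN_\cH(D(0)P_s,\Sigma_s),\cN_\cH(D(0)P_s,d^{-1}I_\cH)} + \tv\brac{\cN_\cH(D(0)P_s,d^{-1}I_\cH),\cN_\cH(m_{\theta,z},d^{-1}I_\cH)}.
\end{equation*}

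\textbf{Covariance term.} By Proposition~\ref{prop:diffusion-convergence}, we have $\opnorm{d\Sigma_s-I_\cH}=O(s^{2r}e^{-2\gamma s})$. Under condition~1 we have $r=0$, although this term tends to zero for any $r$. For two centered-shifted Gaussians with the same mean and covariances $\Sigma,\Sigma'$ on the fixed finite-dimensional space $\cH$, Pinsker's inequality with the explicit Kullback--Leibler formula gives a bound of order $\opnorm{\Sigma'^{-1/2}\Sigma\Sigma'^{-1/2}-I}$ once this quantity is small. Hence this term is at most $\eps$ for all $C$ large enough, uniformly in $\delta\in[0,\eps]$, in $z$, and in $D(0)$, since it does not depend on the mean.

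\textbf{Mean term.} For isotropic covariance $d^{-1}I_\cH$, the distance equals $2\Phi\brac{\sqrt d\,\fnorm{\Delta}/2}-1\le \sqrt{d/(2\pi)}\,\fnorm{\Delta}$, where $\Delta \colonequals D(0)P_s-m_{\theta,z}$. I would decompose
\begin{equation*}
    \Delta = (D(0)-m_{-C,z})(P_s-\Pi) + \brac{m_{-C,z}P_s - m_{\theta-\delta,z}} + \brac{m_{\theta-\delta,z}-m_{\theta,z}},
\end{equation*}
using that $(D(0)-m_{-C,z})\Pi=0$ because this matrix lies in $\cH$.
\begin{enumerate}[(i)]
    \item The middle term vanishes exactly. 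Writing $m_{-C,z}$ and $P_s$ in the common basis $S$, the product is block diagonal. On each slow coordinate, condition~1 makes the block scalar, so it equals $z_\lambda e^{(\gamma-i\Im\lambda)C}e^{(\lambda-1)s}$. Since $\lambda-1=-\gamma+i\Im\lambda$ and $s-C=\theta-\delta$, this is the corresponding entry of $m_{\theta-\delta,z}$. The other blocks of $m_{-C,z}$ are zero.
    \item For the first term, \eqref{eq:spectral-two-sided} with $r=0$ gives $\fnorm{P_s-\Pi}\le c_2e^{-\gamma s}$ for large $s$. Hence this term is at most $\eps e^{\gamma C}c_2 e^{-\gamma(C+\theta-\delta)}\le c_2e^{\gamma(1-\theta)}\eps$.
    \item For the last term, $\phi\mapsto m_{\phi,z}$ is smooth with derivative bounded by a constant depending on $S$, $\gamma$, $\theta$ and $\max|\Im\lambda|$ on $[\theta-1,\theta]$, uniformly in $z$ because $|z_\lambda|=1$. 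So it is $O_\theta(\delta)=O_\theta(\eps)$.
\end{enumerate}
Summing the three pieces gives $\fnorm{\Delta}\le K'_\theta\eps$, and choosing $K_\theta\colonequals 1+\sqrt{d/(2\pi)}\,K'_\theta$ closes the argument.

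The step I expect to be the main obstacle is the mean bound, specifically the exact identity $m_{-C,z}P_s=m_{\theta-\delta,z}$ and the control of the error $(D(0)-m_{-C,z})(P_s-\Pi)$. Both use condition~1 essentially. Diagonal slow blocks make the propagation exact, and $r=0$ ensures that the amplification $e^{\gamma C}$ of the $\eps$-error is exactly compensated by the decay $e^{-\gamma s}$, with no polynomial factor $C^r$. I would also check carefully that the fast blocks contribute only through $P_s-\Pi$, which is already accounted for in (ii), and that all constants are uniform over $z\in\cT_U$.
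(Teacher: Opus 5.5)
Your proposal is correct and follows essentially the same route as the paper: the same three-term decomposition of $D(0)P_s - m_{\theta,z}$, the exact identity $m_{-C,z}P_s = m_{\theta-\delta,z}$ coming from the diagonal slow blocks, the use of $r=0$ so that the factor $e^{\gamma C}$ is cancelled by $\fnorm{P_s-\Pi} = O(e^{-\gamma s})$, and the covariance estimate from Proposition~\ref{prop:diffusion-convergence}. The only difference is cosmetic. The paper bounds the Gaussian total variation distance in one step via Corollary~3 of \cite{BU}, whereas you split it by the triangle inequality into a covariance part (Pinsker with the Gaussian KL formula) and an equal-covariance mean part (the explicit isotropic formula), which works equally well in this fixed-dimensional setting.
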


\begin{proof}
    Fix $\theta \in \R$ and $\eps \in (0, 1]$. Let $\delta \in [0, \eps]$ and set $t = C + \theta - \delta$. We take $C > \eps - \theta$ so that $t > 0$. Condition~1 and \eqref{eq:spectral-two-sided} give $\fnorm{P_t-\Pi} = O(e^{-\gamma t})$. By the definition of $m_{\phi,z}$, we have
    \begin{equation*}
        m_{-C,z}P_t = m_{\theta-\delta,z}\quad \text{ and } \quad \fnorm{m_{\theta-\delta,z}-m_{\theta,z}} = O_\theta(\delta),
    \end{equation*}
    uniformly over $z \in \cT_U$. Since $D(0)-m_{-C,z}\in\cH$, we have $(D(0)-m_{-C,z}) \Pi = 0$, and so
    \begin{align}
    \label{close-means}
        \fnorm{D(0)P_t-m_{\theta,z}}
        &\le \fnorm{(D(0)-m_{-C,z})(P_t - \Pi)} + \fnorm{m_{\theta-\delta,z}-m_{\theta,z}}\nonumber \\
        &= O_\theta(\eps e^{\gamma C-\gamma t} + \delta) = O_\theta(\eps).
    \end{align}
    By Proposition~\ref{prop:diffusion-convergence}, $D(t)$ is Gaussian with mean $D(0)P_t$ and covariance $\Sigma_t$, where
    \begin{equation}
    \label{close-covs}
        \opnorm{\Sigma_t-d^{-1}I_\cH} = O(t^{2r} e^{-2\gamma t}) = O_\theta(e^{-2\gamma C}).
    \end{equation}
    For all sufficiently large $C$, we have $d\opnorm{\Sigma_t-d^{-1}I_\cH}<1/2$. Applying Corollary 3 from \cite{BU} on the space $\cH$, with reference covariance $d^{-1}I_\cH$, and using \eqref{close-means} and \eqref{close-covs}, gives
    \begin{align*}
        \tv \brac{\Law(D(t)), \cN_\cH(m_{\theta,z}, d^{-1}I_\cH)} &= O \brac{\fnorm{D(0)P_t - m_{\theta,z}} + \opnorm{\Sigma_t-d^{-1} I_\cH}}\\
        &= O_\theta(\eps + e^{-2\gamma C}).
    \end{align*}
    Taking $C$ sufficiently large so that $e^{-2\gamma C} \le \eps$ proves the lemma.
\end{proof}

We will also use the following formula for the total variation distance between Gaussian laws with equal covariance, given by Theorem~1 of~\cite{BU}.
\begin{equation}
\label{eq:gaussian-profile}
    \tv \brac{\cN_\cH(m_{\theta,z}, d^{-1}I_\cH), \cN_\cH(0, d^{-1}I_\cH)} = 2\Phi \brac{\frac{\sqrt{d}}{2} \fnorm{m_{\theta,z}}} - 1.
\end{equation}

\begin{lem}
\label{lem:subseq-profile}
    Assume that condition~1 of Theorem~\ref{thm:main} holds and that, for a subsequence $(n_k)_{k \ge 1}$ of the positive integers, the limit
    \begin{equation}
    \label{eqn:lim}
        \lim_{k \to \infty}
        \frac{m(t_{n_k,-C})-n_k\Pi}{\sqrt{n_k}}
    \end{equation}
    exists for every sufficiently large fixed $C$. Then, for every $\lambda \in \LS$, the limit
    \begin{equation*}
        z_\lambda \colonequals \lim_{k \to \infty} e^{it_{n_k,0}\Im(\lambda)}
    \end{equation*}
    exists, and $z = (z_\lambda)_{\lambda \in \LS}$ belongs to $\cT_U$. Moreover, the limit in \eqref{eqn:lim} equals $m_{-C,z}$, and for every $\theta \in \R$, we have
    \begin{equation*}
        \lim_{k\to\infty}d_{n_k}(t_{n_k,\theta}) = 2\Phi\brac{\frac{\sqrt{d}}{2}\fnorm{m_{\theta,z}}} - 1.
    \end{equation*}
\end{lem}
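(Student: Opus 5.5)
We first identify the limit in \eqref{eqn:lim}. By Proposition~\ref{prop:concentration}, $m(t) = nP_t$, so $(m(t)-n\Pi)/\sqrt n = \sqrt n (P_t-\Pi) = \sqrt n\, S B_t S^{-1}$. Under condition~1 the slow block of $B_t$ is diagonal, with entries $e^{(\lambda-1)t}$. At $t=t_{n,-C}$ these equal $n^{-1/2}e^{\gamma C} e^{it_{n,0}\Im\lambda}e^{-iC\Im\lambda}$, while the fast blocks contribute $O(\sqrt n\, t^{m}e^{-(\gamma+\eta)t}) = o(1)$ for some $\eta>0$. Hence
\begin{equation*}
    \fnorm{\sqrt n(P_{t_{n,-C}}-\Pi) - S\diag\brac{0,\brac{e^{it_{n,0}\Im\lambda}e^{(\gamma-i\Im\lambda)C}}_{\lambda\in\LS},0}S^{-1}} \to 0 .
\end{equation*}
Since $S$ is invertible, existence of the limit \eqref{eqn:lim} for a single large $C$ gives convergence of each $e^{it_{n_k,0}\Im\lambda}$, because the prefactor $e^{(\gamma-i\Im\lambda)C}$ is nonzero. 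The limit $z$ lies in $\cT_U$, since it is a limit of points $(e^{iu\Im\lambda})_\lambda$ and $\cT_U$ is closed. The limit in \eqref{eqn:lim} is then exactly $m_{-C,z}$.

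\textbf{Lower bound.} Fix $\theta$, $\eps\in(0,1]$, and $C$ large as in Lemma~\ref{Lem:small-tv}. By Lemma~\ref{lem:worst-start}, start from $nI$. Proposition~\ref{prop:concentration} and the first step give, with probability at least $1-\eps$ for large $k$, that $\fnorm{Y(t_{n_k,-C})-m_{-C,z}}\le \eps e^{\gamma C}$. I will use the Markov property at time $t_{n_k,-C}$ and Proposition~\ref{prop:diffusion-convergence} along subsequences of deterministic initial points. Combined with a Skorokhod or compactness argument on the random initial state, this gives that $\Law(Y(t_{n_k,\theta}))$ is, asymptotically, a mixture of laws within $K_\theta\eps$ of $\cN_\cH(m_{\theta,z},d^{-1}I)$, up to an error $\eps$. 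Thus every weak limit point $\nu$ satisfies $\tv(\nu,\cN(m_{\theta,z},d^{-1}I))\le (K_\theta+1)\eps$.

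Lemma~\ref{lem:stationary-limit} gives $Y(\infty)\Rightarrow\cN(0,d^{-1}I)$. Total variation is lower semicontinuous under weak convergence, so
\begin{equation*}
    \liminf_k d_{n_k}(t_{n_k,\theta}) \ge \tv\brac{\nu,\cN(0,d^{-1}I)} \ge 2\Phi\brac{\tfrac{\sqrt d}{2}\fnorm{m_{\theta,z}}}-1-(K_\theta+1)\eps ,
\end{equation*}
using \eqref{eq:gaussian-profile}. Letting $\eps\to0$ gives the lower bound.

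\textbf{Upper bound.} This is the main obstacle, because weak convergence does not control total variation from above. The plan is to stop at time $t_{n_k,\theta}-\delta$ and compare with a stationary copy. By Lemma~\ref{Lem:small-tv} with $\delta\le\eps$, the law of $Y(t_{n_k,\theta}-\delta)$ is close in the weak sense to $\cN(m_{\theta,z},d^{-1}I)$, which in turn is within total variation distance $2\Phi(\cdot)-1$ of $\cN(0,d^{-1}I)$, the weak limit of $Y(\infty)$.

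Take a maximal coupling of the two limiting Gaussians. Since both marginals are weak limits (of $Y(t_{n_k,\theta}-\delta)$ and of $Y(\infty)$), I will construct couplings of the prelimit pair such that, with probability at least $1-(2\Phi-1)-O(\eps)$, the two rescaled states differ by $o(1)$ in Frobenius norm, i.e.\ by $o(\sqrt n)$ in the unscaled chain, and both lie in $B_n$. This uses a Strassen or Prokhorov-type approximate coupling: couple each prelimit law with its limit so that they are close with high probability, then glue.

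On this event, run both chains for time $\delta$ and apply Proposition~\ref{prop:BBL}. The conditional total variation is at most $C_0\,o(\sqrt n)/\sqrt{n\delta}\to0$. Stationarity of $\pi_n$ under $H_\delta$ then gives
\begin{equation*}
    \limsup_k d_{n_k}(t_{n_k,\theta}) \le 2\Phi\brac{\tfrac{\sqrt d}{2}\fnorm{m_{\theta,z}}}-1+O(\eps)+\P(\text{bad}) .
\end{equation*}
Letting $\eps\to0$ gives the matching upper bound. The delicate point is building the $o(\sqrt n)$-close coupling from weak convergence with the required probability. I would handle it by discretising $\cH$ into small cells and matching masses cell by cell, using Portmanteau for continuity sets.
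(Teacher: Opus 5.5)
Your proposal is correct and follows essentially the paper's route. It uses the same Jordan-decomposition identification of $z$ and of the limit $m_{-C,z}$, the same concentration step at time $t_{n_k,-C}$, and the same diffusion and stationary weak limits combined with Lemma~\ref{Lem:small-tv} and \eqref{eq:gaussian-profile}. Its upper bound is likewise obtained by gluing approximate couplings of the prelimit laws with a maximal coupling of the limits and then applying Proposition~\ref{prop:BBL} over a short time; the coupling step you flag as delicate is just Skorokhod's representation theorem, as in the paper. The only differences are organisational: you handle the random state at time $t_{n_k,-C}$ through weak limit points of the resulting mixture, and you use lower semicontinuity of total variation (with tightness from Proposition~\ref{prop:concentration}) in place of the explicit half-space $W$, whereas the paper proves bounds uniform over deterministic starts in $\cR_k$ by compactness and then applies the Markov property.
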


\begin{proof}
    Under condition~1, the Jordan decomposition of $U$ and Proposition~\ref{prop:concentration} together give
    \begin{equation*}
        \frac{m(t_{n_k,-C})-n_k\Pi}{\sqrt{n_k}} = e^{\gamma C}K_U(t_{n_k,-C}) + o_{k\to\infty}(1)
    \end{equation*}
    for every fixed $C$, where the error term goes to zero entrywise. Conjugating by $S^{-1}$ on the left and $S$ on the right, convergence of the rescaled means for any one such $C$ implies that
    \begin{equation*}
        \brac{e^{i\Im(\lambda) t_{n_k,0}}}_{\lambda \in \LS} \to (z_\lambda)_{\lambda \in \LS}
    \end{equation*}
    for some $z \in \cT_U$. Substituting this into the same expansion shows that the rescaled means converge to $m_{-C,z}$ for every fixed $C$.

    We now prove the profile claim along $(n_k)_{k\ge1}$. Fix $\theta \in \R$ and choose
    \begin{equation*}
        0 < \eps < \min\left\{1, \frac{\delta_1}{4\sqrt{2d}}, \frac{1}{4C_0^2}\right\},
    \end{equation*}
    where $C_0$ is the constant in Proposition~\ref{prop:BBL}. Choose $C$ sufficiently large for Lemma~\ref{Lem:small-tv} to apply and set
    \begin{equation*}
        \cR \colonequals \set{y\in\cH}{\fnorm{y-m_{-C,z}} \le \eps e^{\gamma C}}, \qquad \cR_k \colonequals \cR \cap \cH_{n_k}.
    \end{equation*}
    For fixed $C$ and all sufficiently large $k$, the rescaled mean at time $t_{n_k,-C}$ is within $\eps e^{\gamma C}/2$ of $m_{-C,z}$. Therefore, by Proposition~\ref{prop:concentration}, we have
    \begin{equation}
    \label{eq:profile-concentration}
        \P_{n_kI}\!\brac{Y^{(n_k)}(t_{n_k,-C}) \notin \cR_k} \le \eps.
    \end{equation}
    In what follows, $C$ is fixed before letting $k \to \infty$.

    \textbf{Proof of the upper bound.}
    We first establish the upper bound
    \begin{equation}
    \label{upper-bd}
        \limsup_{k\to\infty} \sup_{y \in \cR_k} \tv \brac{H_{C+\theta}^{(n_k)}(n_k\Pi+\sqrt{n_k}y,\cdot), \pi_{n_k}} \le 2\Phi \brac{\frac{\sqrt{d}}{2}\fnorm{m_{\theta,z}}} - 1 + (K_\theta+4)\eps.
    \end{equation}
    Suppose that \eqref{upper-bd} does not hold. After passing to a further subsequence if necessary, we may choose $y_{n_k} \in \cR_k$ for which the corresponding total variation distances exceed the right-hand side of \eqref{upper-bd} by a fixed positive amount. Since $\cR$ is compact, we may pass to a further subsequence such that $y_{n_k} \to y_0 \in \cR$.

    Set $T = C+\theta-\eps$. By Proposition~\ref{prop:diffusion-convergence}, for the chain started at $Y^{(n_k)}(0) = y_{n_k}$, we have
    \begin{equation*}
        Y^{(n_k)}(T) \Rightarrow D(T),
        \qquad D(0)=y_0,
    \end{equation*}
    where $D$ solves \eqref{eq:sde}. Moreover, Lemma~\ref{lem:stationary-limit} gives
    \begin{equation*}
        Y^{(n_k)}(\infty)\Rightarrow D(\infty),
        \qquad D(\infty)\sim\cN_\cH(0,d^{-1}I_\cH).
    \end{equation*}
    By Skorokhod's representation theorem, there exist couplings such that, for all sufficiently large $k$, we have
    \begin{align*}
        \P \brac{\fnorm{Y^{(n_k)}(T)-D(T)} > \eps^2} \le \eps, \quad \P \brac{\fnorm{Y^{(n_k)}(\infty)-D(\infty)} > \eps^2} \le \eps.
    \end{align*}
    By Lemma~\ref{Lem:small-tv}, with $\delta = \eps$, and \eqref{eq:gaussian-profile}, there exists a coupling of $D(T)$ and $D(\infty)$ such that
    \begin{equation*}
        \P\brac{D(T) = D(\infty)} \ge 2 - 2\Phi \brac{\frac{\sqrt{d}}{2}\fnorm{m_{\theta,z}}} - K_\theta \eps.
    \end{equation*}
    Choose $K\ge2$ such that $\P\brac{\fnorm{D(\infty)} > K/2} \le \eps$. For each sufficiently large $k$, combining these couplings gives a coupling of $Y^{(n_k)}(T)$ and $Y^{(n_k)}(\infty)$ for which the event
    \begin{equation*}
        E \colonequals \left\{\fnorm{Y^{(n_k)}(T)-Y^{(n_k)}(\infty)} \le 2\eps^2\right\} \cap \left\{\fnorm{Y^{(n_k)}(\infty)}\le K\right\}
    \end{equation*}
    satisfies
    \begin{equation*}
        \P\brac{E} \ge 2 - 2\Phi \brac{\frac{\sqrt{d}}{2}\fnorm{m_{\theta,z}}} - (K_\theta+3) \eps.
    \end{equation*}
    On $E$, both rescaled states have Frobenius norm at most $K+2\eps^2$, so their corresponding states belong to $B_{n_k}$ for all sufficiently large $k$. By Proposition~\ref{prop:BBL}, the total variation distance between their transition laws after an additional time $\eps$ is at most
    \begin{equation*}
        \frac{C_0}{\sqrt{n_k\eps}} \cdot 2\sqrt{n_k} \eps^2 = 2C_0 \eps^{3/2} \le \eps.
    \end{equation*}
    Therefore, the original chains started at $n_k \Pi + \sqrt{n_k} y_{n_k}$ and from $\pi_{n_k}$ can be coupled at time $C + \theta$ with failure probability at most
    \begin{equation*}
        \P\brac{E^\complement} + \eps \le 2\Phi\brac{\frac{\sqrt{d}}{2}\fnorm{m_{\theta,z}}} - 1 + (K_\theta+4) \eps.
    \end{equation*}
    This contradicts the choice of $y_{n_k}$ and proves \eqref{upper-bd}.

    Now, by Lemma~\ref{lem:worst-start} and the Markov property, we have
    \begin{equation*}
        d_{n_k}(t_{n_k,\theta}) \le \P_{n_kI}\!\brac{Y^{(n_k)}(t_{n_k,-C}) \notin \cR_k} + \sup_{y\in\cR_k} \tv \brac{H_{C+\theta}^{(n_k)}(n_k\Pi+\sqrt{n_k}y,\cdot), \pi_{n_k}}.
    \end{equation*}
    Combining \eqref{eq:profile-concentration} and \eqref{upper-bd}, and then letting $\eps \to 0^+$, gives
    \begin{equation*}
        \limsup_{k\to\infty}d_{n_k}(t_{n_k,\theta})
        \le2\Phi\brac{\frac{\sqrt{d}}{2}\fnorm{m_{\theta,z}}}-1.
    \end{equation*}

    \textbf{Proof of the lower bound.}
    The total variation distance in \eqref{eq:gaussian-profile} is realized by the half-space
    \begin{equation*}
        W \colonequals \set{x\in\cH}{\finner{x}{m_{\theta,z}} \ge \tfrac{1}{2}\fnorm{m_{\theta,z}}^2}.
    \end{equation*}
    Set $W_n = (n\Pi + \sqrt{n} W) \cap\Omega_n$. For sufficiently large $C$, since the law of $D(C + \theta)$ is absolutely continuous, $W$ is a continuity set. By compactness of $\cR$, every sequence $y_{n_k} \in \cR_k$ has a convergent subsequence with limit $y_0 \in \cR$. Along this subsequence, Proposition~\ref{prop:diffusion-convergence} gives convergence of the measures of $W$, while Lemma~\ref{Lem:small-tv}, with $\delta = 0$, bounds the limiting probability below by
    \begin{equation*}
        \cN_\cH(m_{\theta,z},d^{-1}I_\cH)(W)
            -K_\theta\eps.
    \end{equation*}
    It follows that
    \begin{equation*}
        \liminf_{k\to\infty}\inf_{y\in\cR_k}
                H_{C+\theta}^{(n_k)}
                    \left(n_k\Pi+\sqrt{n_k}y,W_{n_k}\right) \ge
                \cN_\cH(m_{\theta,z},d^{-1}I_\cH)(W)
                    -K_\theta\eps.
    \end{equation*}
    By the Markov property and \eqref{eq:profile-concentration}, for all sufficiently large $k$, we have
    \begin{equation*}
        H_{t_{n_k,\theta}}^{(n_k)}\left(n_kI,W_{n_k}\right) \ge (1-\eps) \inf_{y\in\cR_k} H_{C+\theta}^{(n_k)}(n_k\Pi+\sqrt{n_k}y,W_{n_k}).
    \end{equation*}
    Moreover, Lemma~\ref{lem:stationary-limit} gives
    \begin{equation*}
        \pi_{n_k}(W_{n_k}) \to \cN_\cH(0,d^{-1}I_\cH)(W).
    \end{equation*}
    Therefore, using \eqref{eq:gaussian-profile}, we obtain
    \begin{align*}
        \liminf_{k\to\infty} d_{n_k}(t_{n_k,\theta}) &\ge \liminf_{k\to\infty} \brac{H_{t_{n_k,\theta}}^{(n_k)}(n_kI,W_{n_k}) - \pi_{n_k}(W_{n_k})}\\
        &\ge (1-\eps) \brac{ \cN_\cH(m_{\theta,z},d^{-1}I_\cH)(W) - K_\theta\eps} -\cN_\cH(0,d^{-1}I_\cH)(W)\\
        &\ge 2\Phi\brac{\frac{\sqrt{d}}{2} \fnorm{m_{\theta,z}}} - 1 -(K_\theta+1)\eps.
    \end{align*}
    Letting $\eps \to 0^+$ proves the lower bound and completes the proof.
\end{proof}

\begin{proof}[Proof of Theorem~\ref{thm:main}]
    Fix $\theta \in \R$. By condition~1 and \eqref{eq:spectral-two-sided}, the sequence $\{\sqrt{n}(P_{t_{n,0}}-\Pi)\}$ is bounded. Hence, from any subsequence of the positive integers, we can extract a subsequence $(n_k)_{k\ge1}$ along which these matrices converge. For every fixed $C>0$, we have
    \begin{equation*}
        \frac{m(t_{n_k,-C})-n_k\Pi}{\sqrt{n_k}} = \sqrt{n_k}\,(P_{t_{n_k,0}}-\Pi) e^{-C(U-I)}
    \end{equation*}
    for all sufficiently large $k$. Thus, along the same subsequence, the rescaled means converge for every fixed $C > 0$. Lemma~\ref{lem:subseq-profile} then yields a phase vector $z$ which by continuity of the Frobenius norm and condition~2 satisfies
    \begin{equation*}
        \fnorm{m_{\theta,z}} = e^{-\gamma \theta} \lim_{k\to\infty} \fnorm{K_U(t_{n_k,\theta})} = e^{-\gamma\theta} \kappa_U.
    \end{equation*}
    Consequently, the final assertion in Lemma~\ref{lem:subseq-profile} gives
    \begin{equation*}
        \lim_{k \to \infty} d_{n_k}(t_{n_k,\theta}) = 2\Phi\brac{\frac{\kappa_U \sqrt{d}}{2} e^{-\gamma \theta}} - 1.
    \end{equation*}
    Since every subsequence has a further subsequence along which the total variation distances converge to this same value, the whole sequence converges to the claimed limit.
\end{proof}

\bibliographystyle{amsplain}
\bibliography{references}

\end{document}